\documentclass[11pt]{article}

\pagenumbering{arabic}

\usepackage{amssymb,amsmath,amsthm,latexsym,verbatim,url}
\usepackage{fullpage}
\makeatletter
\def\imod#1{\allowbreak\mkern10mu({\operator@font mod}\,\,#1)}
\makeatother

\newtheorem{thm}{Theorem}[section]
\newtheorem{cor}{Corollary}[section]
\newtheorem{lem}{Lemma}[section]
\theoremstyle{definition}
\newtheorem{defn}{Definition}[section]
\newtheorem{prop}{Proposition}[section]

\newtheorem*{thm*}{Theorem}
\newtheorem*{cor*}{Corollary}
\newtheorem*{lem*}{Lemma}
\theoremstyle{definition}
\newtheorem*{defn*}{Definition}


\DeclareMathOperator{\argmin}{argmin}

\DeclareMathOperator{\relint}{relint}
\DeclareMathOperator{\aff}{aff}

\DeclareMathOperator{\CC}{CC}

\newcommand{\set}[1]{\{#1\}}			
\newcommand{\floor}[1]{\lfloor #1 \rfloor}

\newcommand{\round}[1]{\lfloor #1 \rceil}
\newcommand{\pr}[2]{\langle #1, #2 \rangle}
\newcommand{\N}{\mathbb{N}}                     
\newcommand{\Q}{\mathbb{Q}}                     
\newcommand{\R}{\mathbb{R}}                     
\newcommand{\Z}{\mathbb{Z}}                     

\def\eps{\epsilon}

\def\int{\mathrm{int}}

\def\relbd{\mathrm{relbd}}
\def\bd{\mathrm{bd}}
\def\int{\mathrm{int}}
\def\relint{\mathrm{relint}}

\def\conv{\mathrm{conv}}
\def\ext{\mathrm{ext}}

\begin{document}

\title{On the Chv\'atal-Gomory Closure of a Compact Convex Set}
\author{Daniel Dadush\thanks{H. Milton Stewart School of Industrial and Systems Engineering, Georgia Institute of Technology, 765 Ferst Drive NW, Atlanta, GA 30332-0205, USA {\tt dndadush@gatech.edu}} \and Santanu S. Dey\thanks{H. Milton Stewart School of Industrial and Systems Engineering, Georgia Institute of Technology, 765 Ferst Drive NW, Atlanta, GA 30332-0205, USA {\tt santanu.dey@isye.gatech.edu}} \and Juan Pablo Vielma\thanks{Department of Industrial Engineering, University of Pittsburgh, 1048 Benedum Hall, Pittsburgh, PA 15261, USA {\tt jvielma@pitt.edu}} }
\maketitle
\thispagestyle{empty}
$\quad$
\begin{abstract}
In this paper, we show that the Chv\'atal-Gomory closure of a compact convex set is a rational polytope. This resolves an open question discussed in Schrijver~\cite{Schrijver80} and generalizes the same result for the case of rational polytopes~\cite{Schrijver80}, rational ellipsoids~\cite{Dey09} and strictly convex sets~\cite{Dadush:de:vi:10}. In particular, it shows that the CG closure of an irrational polytope is a rational polytope, which was the open question in~\cite{Schrijver80}. 
\end{abstract}
\pagebreak
\setcounter{page}{1}
\section{Introduction}
Gomory~\cite{Gomory58} introduced the Gomory fractional cuts, also known as Chv\'atal-Gomory (CG) cuts, to design the first finite cutting plane algorithm for integer linear programs. Since then, many important classes of facet-defining inequalities for combinatorial optimization problems have been identified as CG cuts. For example, the matching polytope can be obtained using Chv\'atal-Gomory cuts~\cite{edmonds:1965}. CG cuts have also been effective from a computational perspective; see for example~\cite{Bonami08b}, \cite{Fischetti07}. Although traditionally CG cuts have been defined for rational polyhedron for solving integer linear programs, they can be defined for general convex sets so as to be useful in solving convex integer programs, i.e. discrete optimization problems where the continuous relaxation is a convex optimization problem. CG cuts for non-polyhedral sets were considered implicitly in \cite{Chvatal73,Schrijver80} and more explicitly in \cite{CezikIyengar05,Dey09}. Let $K\subseteq \mathbb{R}^n$ be a closed convex set and let $h_K$ represent its support function, i.e. $h_K(a) = \textup{sup}\{ \langle a,x \rangle\,:\, x \in K\}$. Then given $a \in \mathbb{Z}^n$ such that $h_K(a) < +\infty$, the CG cut corresponding to $a$ is derived as,
\begin{eqnarray}\label{CGdefine}
\langle a, x \rangle \leq \lfloor h_K(a) \rfloor.
\end{eqnarray}
The CG closure is defined as the convex set obtained by the intersection of all viable CG cuts.
A classical result of Chv\'atal~\cite{Chvatal73} and Schriver~\cite{Schrijver80} states that the CG closure of a rational polyhedron is a rational polyhedron. This is a crucial property, since it is a mathematical guarantee that there exists a `relatively important' finite subset of CG cuts that defines the CG closure. Recently, we were able to verify that the CG closure of a compact convex set obtained as the intersection of a strictly convex set and a rational polyhedron is a rational polyhedron~\cite{Dey09,Dadush:de:vi:10}. The proof involved using significantly different techniques to the ones used in~\cite{Schrijver80}.

While intersection of strictly convex sets and rational polyhedron is an important class of convex sets, they do not capture the whole gamut of interesting convex sets that appear in convex IPs. The barrier in extending our understanding of the CG closure from the setting of the intersection of a strictly convex set and a rational polyhedra to the setting of a general convex set, is in dealing with irrationality. When working with integer linear programs, it is reasonable to assume that the set is defined by rational data and all the extreme points and rays of the feasible set are rational. However, when dealing with general convex IPs, this assumption breaks down in a natural way. For example, the Lorentz cone~\cite{nemibook} has irrational extreme rays and second order representable sets naturally (not always) inherit irrational generators. One way to design tools to deal with irrationality is to perhaps work with irrational polytopes. Schrijver~\cite{Schrijver80} considered this question. In a discussion section at the end of the paper, he writes that\footnote{Theorem 1 in~\cite{Schrijver80} is the result that the CG clsoure is a polyhedron. $P'$ is the notation used for CG closure in~\cite{Schrijver80}}:
\begin{quote}
``We do not know whether the analogue of Theorem 1 is true in real spaces. We were able to show only that if $P$ is a bounded polyhedron in real space, and $P'$ has empty intersection with the boundary of $P$, then $P'$ is a (rational) polyhedron."
\end{quote}

In this paper, we are able to prove the CG closure of any compact convex set\footnote{If the convex hull of a set of integer points is not a polyhedron, then the CG closure cannot be expected to be a rational polyhedron. Since we do not understand well when the convex hull of integer points in general convex sets is a polyhedron, we consider the question of CG closure only for compact convex sets here.} is a rational polytope, thus also resolving the question raised in~\cite{Schrijver80} for any polytope. We note here that while the intersection of the CG closure of a convex set $K$ and the boundary of $K$ is not always empty, this intersection identified in~\cite{Schrijver80} plays a crucial role in our proof.

As discussed above, proving that the compact convex set involved understanding and developing tools to handle irrationality. Therefore, while the proof presented in this paper is similar in parts to the proof in~\cite{Dadush:de:vi:10}, major components of the proof are new. New connections with diophantine approximations were necessary for the proof here. Moreover, we have been able to unearth some interesting new properties of CG closures and convex sets in general, and also design new techniques that we believe are important on their own.

This paper is organized as follows. In Section~\ref{def} we give some notation, formally state our main result and give an overview of the proof which is presented in  Sections~\ref{sec:long}--\ref{sec:final}.

\section{Definitions, main result and proof idea}\label{def}
\begin{defn}[CG Closure]For a convex set $K\subseteq \mathbb{R}^n$ and  $S\subseteq \mathbb{Z}^n$ let  $\CC(K,S) :=\bigcap_{a\in S} \set{x\in \mathbb{R}^n\,:\,  \pr{x}{y} \leq \floor{h_K(y)}}$. The CG Closure of $K$ is defined to be the set $\CC(K):=\CC(K,\mathbb{Z}^n)$.
\end{defn}
In this paper, we are able to establish the following result.
\begin{thm}\label{paperresult} If $K \subseteq \R^n$ be a non-empty compact convex set, then $\CC(K)$ is finitely generated. That is, there exists $S\subseteq \mathbb{Z}^n$ such that $|S|< \infty$ and $\CC(K)=\CC(K,S)$. In particular $\CC(K)$ is a rational polyhedron.
\end{thm}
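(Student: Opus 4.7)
The plan is to prove Theorem~\ref{paperresult} via a local-to-global compactness argument. Since $\CC(K) \subseteq K$ it is bounded and, being an intersection of closed half-spaces, closed; hence it is compact. I aim to show that for every $x^* \in \CC(K)$ there is an open neighborhood $U_{x^*}$ and a finite set $S_{x^*} \subseteq \mathbb{Z}^n$ with
\[
\CC(K, S_{x^*}) \cap U_{x^*} \;=\; \CC(K) \cap U_{x^*}.
\]
Finitely many such $U_{x^*}$ then cover $\CC(K)$; a routine separation argument (each point of $K \setminus \CC(K)$ is strictly cut off by some CG cut, and compactness of an outer bound supplies finitely many such cuts whose addition forces $\CC(K,S) \setminus \CC(K)$ to lie inside the cover) produces a single finite set $S \subseteq \mathbb{Z}^n$ with $\CC(K) = \CC(K, S)$. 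Since any $\CC(K, S)$ with $|S| < \infty$ is an intersection of finitely many integer half-spaces, the theorem follows.

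For $x^* \in \int K$ the local finiteness claim is essentially classical. Any CG cut $\pr{a}{x} \leq \floor{h_K(a)}$ that is active inside a small ball $U$ around $x^*$ must simultaneously satisfy $h_K(a) - \floor{h_K(a)} < 1$ and cut through $U$, which lies at distance $\delta > 0$ from $\partial K$. These two conditions bound $\|a\|$ from above, so only finitely many $a \in \mathbb{Z}^n$ can contribute a locally irredundant cut, yielding the required $S_{x^*}$.

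The real difficulty — and the configuration Schrijver explicitly singled out in the quoted excerpt — is $x^* \in \partial K \cap \CC(K)$. Let $F$ be the minimal face of $K$ containing $x^*$ and let $a^*$ be an outer normal to a supporting hyperplane of $K$ at $F$. If $F$ admits a rational supporting normal then, after an induction on $\dim F$ applied inside the rational affine subspace $\aff(F) \cap K$, the problem reduces to a lower-dimensional instance. The genuinely hard subcase is when \emph{every} supporting normal of $F$ is irrational. Here I would invoke a Dirichlet-type simultaneous diophantine approximation to produce integer vectors $a_k$ with $a_k/\|a_k\| \to a^*/\|a^*\|$ and controlled error $\|a_k - q_k a^*\|$ decreasing in $q_k$. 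Each approximant $a_k$ gives a CG cut that can be locally active near $x^*$, and a priori infinitely many are. The crux is to show that only finitely many of them are irredundant locally: the integer points of $K$ at which $h_K(a_k)$ is (nearly) attained must cluster near $F$, and the requirement that $\pr{a_k}{x^*} = \floor{h_K(a_k)}$ translates into a congruence condition in a sublattice associated to $\aff(F)$ that has only finitely many solutions. The remaining approximants then yield CG cuts dominated by these finitely many.

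The main obstacle, as indicated, is precisely this last step: converting the finiteness of relevant diophantine approximants into the local finiteness of irredundant CG cuts at boundary points with irrational supporting normals. This is the place where the paper's new ingredients — the integration of CG-closure geometry with classical diophantine approximation — are essential, and it is also where the argument must diverge most sharply from the strictly convex case of~\cite{Dadush:de:vi:10}, since strict convexity rules out positive-dimensional faces and thereby eliminates this subcase entirely.
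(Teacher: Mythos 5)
Your scaffolding (cover $\CC(K)$ by neighborhoods on which finitely many cuts suffice, then clean up the complement by compactness) is sound as a reduction, and your interior-point norm bound is the standard argument; but both places where you defer the work are exactly where the theorem lives, and neither sketch closes. First, the reduction to a face $F\ni x^*$ is not a reduction until you can \emph{lift}: a CG cut $\pr{w}{x}\leq\floor{h_F(w)}$ for $F$ is not a CG cut for $K$, so even granting (by induction) that $\CC(F)$ is a rational polytope, you still must produce integer vectors $w'$ whose cuts for $K$ reproduce the cuts for $F$ near $x^*$. In the paper this is Proposition~\ref{lem:lift2}, proved with Dirichlet approximation of the normal $v$, and crucially it only works on $\aff_I(H_v(K))$, not on $\aff(F)$; the points of $F_v\setminus\aff_I(H_v)$ must be killed separately, which is Proposition~\ref{prop:killirr} and uses Kronecker's theorem (density of $\set{nv \bmod 1}$) to build $\mathcal{D}(v)+1$ integer directions whose cuts intersect $H_v$ inside $\aff_I(H_v)$ and push the closure weakly below $h_K(v)$. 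Your assertion that the condition $\pr{a_k}{x^*}=\floor{h_K(a_k)}$ becomes ``a congruence condition with finitely many solutions'' and that the remaining approximants are dominated is precisely this missing step, not a proof of it; Dirichlet approximants of the single normal $a^*$ do not by themselves separate a positive-dimensional face from the non-integer part of its supporting hyperplane, and your dichotomy (rational versus all-irrational supporting normal) does not simplify matters, since with a rational normal the offset $h_K(a^*)$ can still be irrational and the same $\aff_I$ issues arise.

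Second, your case analysis collapses for lower-dimensional $K$: at a point of $\relint(K)$ the minimal face is $K$ itself, so the ``induct on $\dim F$'' step is circular, and the interior norm bound fails because only the component of $a$ along the direction space of $\aff(K)$ is controlled, while infinitely many integer vectors share essentially the same projection with different floors; when $\aff(K)$ is irrational you cannot even pass to a rational sublattice. The paper's route deals with all of this globally rather than locally: it proves the face identity $\CC(F_v,S)=H_v\cap\CC(K,S')$ (Sections~\ref{sec:lift}--\ref{sec:facehomogenity}), uses it plus a compactness argument over directions to build a finite-cut polytope approximation inside $K\cap\aff_I(K)$ (Proposition~\ref{lem:move-inside}), refines it so that it agrees with $\CC(K)$ on $\relbd(K)$ using the fact that a polytope inside $K$ meets $\relbd(K)$ in finitely many faces (Lemma~\ref{lem:finite-bd}, Proposition~\ref{lem:inter-body}), and only then bounds the remaining cuts: any further irredundant cut must cut a vertex lying in $\relint(K)$, and after projecting to the rational subspace $W=\aff_I(K)$ and choosing, for each $w\in\pi_W(\Z^n)$, a minimizer $z_w$ of $\floor{h_K(z)}$, only finitely many projected directions can do so. Until you supply substitutes for the lifting proposition, the Kronecker separation of $F_v\setminus\aff_I(H_v)$, and the $\aff_I$/projected-lattice treatment of lower-dimensional and irrational affine hulls, the local finiteness claim at boundary points --- which is essentially equivalent to the theorem near that face --- remains unproven.
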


We will use the following  notation in our proof:
\begin{itemize}
	\item Let $B^n = \set{x\in\mathbb{R}^n\,: \|x\| \leq 1}$ and $S^{n-1} = \bd(B^n)$. ($\bd$ stands for boundary)\vspace{-0.1in}

	\item For a convex set $K$ and $v\in S^{n-1}$ we let $H_v(K) := \set{x\in \R^n\,:\, h_K(v) = \pr{v}{x}}$ be the hyperplane defined by $v$ and the support function of $K$. We also let $F_v(K) := K\cap H_v(K)$ be the face of $K$ exposed by $v$. If $F_v(K)\neq K$ we say that $F_v(K)$ is a proper exposed face and if the context is clear we regularly drop $K$ from the notation and simply write $H_v$ and $F_v$.\vspace{-0.1in}

\item For $A \subseteq \R^n$, let $\aff(A)$ denote the smallest affine subspace containing $A$. Furthermore denote
$\aff_I(A) = \aff(\aff(A) \cap \Z^n)$, i.e. the largest integer subspace in $\aff(A)$.
\end{itemize}
This notation is fairly standard with the exception of $\aff_I(A)$. Understanding the properties of $\aff_I(A)$ when $\aff(A)$ is not a rational affine space will be crucial for the proof of Theorem~\ref{paperresult}. In particular, we will  repeatedly use the fact that if $K$ is a compact convex set, then we can obtain a inner approximation of $K\cap\aff_I(K)$ using a finite number of CG cuts.

The outline of the main steps in our proof of Theorem \ref{paperresult} is as follows:

\begin{enumerate}
	\item (Section \ref{sec:long}) For $v \in \R^n$ and $S\subseteq \mathbb{Z}^n$, show  that  $\exists S'\subseteq \mathbb{Z}^n$ such that $\CC(F_v,S) = H_v \cap \CC(K,S')$ and $|S|<\infty \Rightarrow |S'|< \infty$ by proving the following:
	\begin{enumerate}
    \item (Section \ref{sec:lift})  CG cuts  for  $F_v$ can be rotated or ``lifted''  to  become CG cuts for $K$ such that points in $F_v \cap \aff_I(H_v)$  separated by the original  CG cut for  $F_v$ are  separated by the new ``lifted'' one.
    \item (Section \ref{sec:sepirrational}) A finite number of CG cuts for $K$  separate all points in $F_v \setminus \aff_I(H_v)$.
    \end{enumerate}
\item (Section \ref{sec:approx}) Assuming $\CC(F_v)$ is finitely generated for any proper exposed face $F_v$ create an approximation $\CC(K,S)$ of $\CC(K)$ such that (i) $|S|<\infty$, (ii) $\CC(K,S)\subseteq K \cap \textup{aff}_I(K)$  (iii) $\CC(K,S) \cap \relbd(K) = \CC(K) \cap \relbd(K)$. This is done in the following two steps:
	\begin{enumerate}
		\item(Section \ref{sec:approx1}) Using the assumption, $\CC(F_v,S) = H_v \cap \CC(K,S')$ and a compactness argument create a first approximation satisfying (i) and (ii).
		\item(Section \ref{sec:approx2}) Using the assumption and noting that a polytope $P\subseteq K$ intersects $\relbd(K)$ along a finite number of faces of $P$ refine the approximation to  satisfy (iii).
	\end{enumerate}
\item (Section \ref{sec:final}) Finally, we are able to establish the result of the Theorem by induction on the dimension of $K$. The key observation is that there are only finitely many  CG cuts that separate at least one vertex of the second approximation of the CG closure.
\end{enumerate}
\vspace{-0.1in}
\section{$\CC(F_v,S) = H_v \cap \CC(K,S')$}\label{sec:long}
In the case of a rational polyhedra $K$, a key property of the CG closure  is that, if $F$ is a face of $K$, then $\CC(F) = F \cap \CC(K)$. Using an induction argument this property can be used  to construct the second approximation in the outline of our proof for the case in which $K$ is a rational polyhedron. However, this property is not enough for general convex sets.

For instance, when $K$ is a strictly convex set all proper faces of $K$ are single points and property  $\CC(F) = F \cap \CC(K)$ (or even $\CC(F) = F \cap \CC(K,S')$ for $|S'|<\infty$)  only tells us that every non-integral point in $\bd(K)$ can be separated with  CG cuts, but it does not tell us anything about the neighborhood of integral points. For this reason we need the stronger property $\CC(F_v,S) = H_v \cap \CC(K,S')$. In particular, this property implies that if $K$ is a full dimensional compact strictly convex and  $\CC(F_v)$ is finitely generated for every $v$ then for each integer point $x\in \bd(K)$ there exists a finite number of CG cuts that separate a neighborhood of $\bd(K)$ around $x$ which is exactly what is needed in \cite{Dadush:de:vi:10}.

We finally note here that the proof of the fact that $\CC(F_v,S) = H_v \cap \CC(K,S')$ for the case of general compact convex set is significantly more involved than for the case where $K$ is a rational polyhedron or a strictly convex set.

\subsection{Lifting CG cuts}\label{sec:lift}

$\CC(F) = F \cap \CC(K)$ is usually proven using a `lifting approach', i.e., given a  CG  for  $F$  of the form $\pr{w}{x} \leq \floor{h_{F}(w)}$ where $w \in \Z^n$, it is shown that there exists $w' \in \Z^n$ such that
\begin{eqnarray}\label{toprovelifting}
\set{x: \pr{w'}{x} \leq \floor{h_K(w')}} \cap \aff(F) \subseteq \set{x: \pr{w}{x} \leq \floor{h_{F}(w)}} \cap \aff(F).
\end{eqnarray}

In order to prove (\ref{toprovelifting}) (in the case of a rational polyhedron) we typically appeal to the rational description of $K$ and Farka's Lemma. The appropriate version of \eqref{toprovelifting} for strictly convex sets is proven in \cite{Dadush:de:vi:10} by approximating the left hand side of CG cuts for $F$ using Dirichlet's diophantine approximation theorem. The appropriate version of (\ref{toprovelifting}) for the case of general compact convex sets simply replaces $\aff(F)$ with $\aff_I(H_v)$ and generalizes both (\ref{toprovelifting}) and the version in  \cite{Dadush:de:vi:10}. This general version is given in  Proposition \ref{lem:lift2} with a proof that is similar to that in   \cite{Dadush:de:vi:10} and for which Dirichlet's  theorem again plays an important role.

	Lemmas \ref{lem:nopt-conv}- \ref{lem:2step-conv} are technical results that are needed for proving Proposition \ref{lem:lift2}.
\begin{lem} Let $K$ be a compact convex set in $\R^n$. Let $v \in \R^n$, and let $(x_i)_{i=1}^\infty$, $x_i \in K$,
be a sequence such that $\lim_{i \rightarrow \infty} \pr{v}{x_i} = h_K(v)$. Then
\[
\lim_{i \rightarrow \infty} d(F_v(K), x_i) = 0.
\]
\label{lem:nopt-conv}
\end{lem}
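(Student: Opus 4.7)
The plan is to prove the lemma by contradiction, exploiting the compactness of $K$ together with the continuity of the linear functional $\pr{v}{\cdot}$. Suppose the conclusion fails. Then there is some $\eps > 0$ and a subsequence $(x_{i_k})_{k=1}^\infty$ satisfying $d(F_v(K), x_{i_k}) \geq \eps$ for every $k$, while still $\pr{v}{x_{i_k}} \to h_K(v)$.

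Since $K$ is compact, $(x_{i_k})$ has a further subsequence $(x_{i_{k_j}})$ converging to some point $x^\ast \in K$. Continuity of the linear functional $\pr{v}{\cdot}$ yields
\[
\pr{v}{x^\ast} = \lim_{j \to \infty} \pr{v}{x_{i_{k_j}}} = h_K(v),
\]
so $x^\ast \in K \cap H_v(K) = F_v(K)$. Consequently $d(F_v(K), x_{i_{k_j}}) \leq \|x_{i_{k_j}} - x^\ast\| \to 0$, contradicting $d(F_v(K), x_{i_{k_j}}) \geq \eps$.

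The argument is essentially a one-shot compactness argument, so there is no serious obstacle. The only point worth a second of care is that $F_v(K)$ is nonempty (and in fact closed), which is exactly where compactness of $K$ is used: $\pr{v}{\cdot}$ is continuous on the compact set $K$ and hence attains its supremum, so the limit point $x^\ast$ lies in $F_v(K)$ and the distance to $F_v(K)$ is well defined. If desired, one can also phrase the proof directly: by compactness, every subsequence of $(x_i)$ has a convergent sub-subsequence whose limit lies in $F_v(K)$ by the above, so the full sequence satisfies $d(F_v(K), x_i) \to 0$.
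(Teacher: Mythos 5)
Your proof is correct and follows essentially the same route as the paper: a contradiction argument using compactness of $K$ to extract a convergent subsequence and continuity of $\pr{v}{\cdot}$ to place the limit point in $F_v(K)$. The only cosmetic difference is that you reach the contradiction via $d(F_v(K), x_{i_{k_j}}) \leq \|x_{i_{k_j}} - x^\ast\| \to 0$ rather than via continuity of $d(F_v(K),\cdot)$, which is an equivalent finishing step.
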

\begin{proof}
Let us assume that $\lim_{i \rightarrow \infty} d(F_v(K), x_i) \neq 0$. Then there exists an $\eps > 0$ such that for
some subsequence $(x_{\alpha_i})_{i=1}^\infty$ of $(x_i)_{i=1}^\infty$ we have that $d(F_v(K), x_{\alpha_i}) \geq \eps$.
Since $(x_{\alpha_i})_{i=1}^\infty$ is an infinite sequence on a compact set $K$, there exists a convergent subsequence
$(x_{\beta_i})_{i=1}^\infty$ where $\lim_{i \rightarrow \infty} x_{\beta_i} = x$ and $x \in K$. Now we note that
$d(F_v(K), x) = \lim_{i \rightarrow \infty} d(F_v(K), x_{\beta_i}) \geq \eps$, where the first equality follows from the
continuity of $d(F_v(K), \cdot)$. Since $d(F_v(K),x) > 0$ we have that $x \notin F_v(K)$. On the other hand,
\[
h_K(v) = \lim_{i \rightarrow \infty} \pr{v}{x_i} = \lim_{i \rightarrow \infty} \pr{v}{x_{\beta_i}} = \pr{v}{x}
\]
and hence $x \in F_v(K)$, a contradiction.
\end{proof}

\begin{lem} Let $K$ be a compact convex set in $\R^n$. Let $v \in \R^n$, and let $(v_i)_{i=1}^\infty$, $v_i \in
\R^n$, be a sequence such that $\lim_{i \rightarrow \infty} v_i = v$. Then for any sequence $(x_i)_{i=1}^\infty$,
$x_i \in F_{v_i}(K)$, we have that
\[
\lim_{i \rightarrow \infty} d(F_v(K), x_i) = 0.
\]
\label{lem:dir-conv}
\end{lem}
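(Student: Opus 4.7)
The plan is to reduce this lemma to Lemma \ref{lem:nopt-conv} by showing that $\lim_{i \to \infty} \pr{v}{x_i} = h_K(v)$ and then invoking that previous result directly. Given this target, the job splits into controlling two separate perturbations: the perturbation from $\pr{v_i}{x_i}$ to $\pr{v}{x_i}$, and the perturbation from $h_K(v_i)$ to $h_K(v)$.

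First, since $x_i \in F_{v_i}(K) \subseteq K$ and $K$ is compact, there exists $M > 0$ such that $\|x_i\| \leq M$ for all $i$. The hypothesis $x_i \in F_{v_i}(K)$ gives $\pr{v_i}{x_i} = h_K(v_i)$. By Cauchy--Schwarz,
\[
|\pr{v}{x_i} - \pr{v_i}{x_i}| \;=\; |\pr{v - v_i}{x_i}| \;\leq\; \|v - v_i\|\cdot M,
\]
which tends to $0$ by the assumption $v_i \to v$.

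Second, I would note that $h_K$ is Lipschitz on $\R^n$: for any $w, w' \in \R^n$ and any $x \in K$ one has $\pr{w}{x} = \pr{w'}{x} + \pr{w - w'}{x} \leq h_K(w') + \|w - w'\| M$, so $h_K(w) \leq h_K(w') + \|w - w'\|M$, and by symmetry $|h_K(w) - h_K(w')| \leq M \|w - w'\|$. Applying this with $w = v$ and $w' = v_i$ yields $h_K(v_i) \to h_K(v)$.

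Combining the two bounds gives $\pr{v}{x_i} = h_K(v_i) + \pr{v - v_i}{x_i} \to h_K(v)$, so $(x_i)$ is a sequence in $K$ with $\pr{v}{x_i} \to h_K(v)$. Lemma \ref{lem:nopt-conv} then yields $d(F_v(K), x_i) \to 0$, as required. The argument is entirely routine; the only subtle point is recognizing that we must perturb both the linear functional and the support value simultaneously, which is handled uniformly by the bound $\|x_i\| \leq M$ coming from compactness of $K$.
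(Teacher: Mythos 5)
Your proof is correct and follows essentially the same route as the paper: both reduce to Lemma \ref{lem:nopt-conv} by showing $\pr{v}{x_i} \to h_K(v)$, using boundedness of $K$ to control $\pr{v-v_i}{x_i}$ and the continuity of $h_K$ (which you establish explicitly via a Lipschitz bound, while the paper cites convexity and finiteness of $h_K$).
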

\begin{proof}
We claim that $\lim_{i \rightarrow \infty} \pr{x_i}{v} = h_K(v)$. Since $K$ is compact, there exists $R \geq 0$ such
that $K \subseteq RB^n$. Hence we get that
\begin{align*}
h_K(v) &= \lim_{i \rightarrow \infty} h_K(v_i) = \lim_{i \rightarrow \infty} \pr{v_i}{x_i}  \\
         &= \lim_{i \rightarrow \infty} \pr{v}{x_i} + \pr{v_i-v}{x_i}
         \leq \lim_{i \rightarrow \infty} \pr{v}{x_i} + \|v_i-v\|R = \lim_{i \rightarrow \infty} \pr{v}{x_i},
\end{align*}
where the first equality follows by continuity of $h_K$ ($h_K$ is convex on $\R^n$ and finite valued). Since each $x_i
\in K$, we get the opposite inequality $\lim_{i \rightarrow \infty} \pr{v}{x_i} \leq h_K(v)$ and hence we get equality
throughout. Now by lemma \ref{lem:nopt-conv} we get that $\lim_{i \rightarrow \infty} d(F_v(K),x_i) = 0$ as needed.
\end{proof}
In the next lemma, vector $w$ will eventually represent the left-hand-side of the CG cut for  $F_v$ that we want to lift and vectors $(s_i)_{i = 1}^{\infty}$ will represent a sequence of left-hand-side  vectors that will be used to derive ``lifted" CG cuts for $K$. The conditions given in Lemma \ref{lem:2step-conv} on $s_i$  will be achieved as a consequence of Dirichlet's approximation theorem applied to $v$ and the result of the lemma will allow the original and lifted CG cuts  to separate the same points in $\aff_I(F_v)$. 
\begin{lem} Let $K \subseteq \R^n$ be a compact convex set. Take $v, w \in \R^n$, $v \neq 0$. Let $(s_i,t_i)_{i=1}^\infty$,
$s_i \in \R^n, t_i \in \R_+$ be a sequence such that
\begin{equation}
\label{eq:2step-cd}
\begin{split}
a.&~~ \lim_{i \rightarrow \infty} t_i = \infty, \\
b.&~~ \lim_{i \rightarrow \infty} s_i - t_iv = w. \\
\end{split}
\end{equation}
Then for every $\eps > 0$ there exists $N_\eps \geq 0$ such that for all $i \geq N_{\eps}$
\begin{equation}
	\label{four}
h_K(s_i) + \eps \geq t_ih_{K}(v) + h_{F_v(K)}(w) \geq h_K(s_i) - \eps.
\end{equation}
\label{lem:2step-conv}
\end{lem}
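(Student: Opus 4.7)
The plan is to prove the two inequalities in \eqref{four} separately, viewing them as upper and lower bounds on $h_K(s_i)-t_ih_K(v)$ against the target value $h_{F_v(K)}(w)$. Throughout, I would use that $K$ is compact (so sups are attained and $h_K$ is continuous) and that $F_v(K)$ is a nonempty compact convex set (so $h_{F_v(K)}$ is continuous and attained as well).

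For the lower bound (first inequality), I would pick $y_i^\ast \in F_v(K)$ attaining $h_{F_v(K)}(s_i - t_i v)$ for each $i$. Then decomposing $s_i = t_iv + (s_i - t_iv)$ and using $\pr{v}{y_i^\ast} = h_K(v)$ gives
\[
h_K(s_i)\;\ge\;\pr{s_i}{y_i^\ast}\;=\;t_i h_K(v) + h_{F_v(K)}(s_i - t_i v).
\]
By hypothesis $s_i - t_i v \to w$, so by continuity of $h_{F_v(K)}$ the last term converges to $h_{F_v(K)}(w)$, giving the lower half of \eqref{four} for $i$ large.

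For the upper bound (second inequality), let $x_i \in K$ attain $h_K(s_i)=\pr{s_i}{x_i}$. The first task is to show $\pr{v}{x_i}\to h_K(v)$, so that $x_i$ becomes nearly feasible for $F_v(K)$. This I get by combining the identity $h_K(s_i) = t_i\pr{v}{x_i} + \pr{s_i - t_iv}{x_i}$ with the lower bound just proven: since $\pr{s_i-t_iv}{x_i}$ is bounded (the sequence $s_i - t_iv$ converges, hence is bounded, and $K$ is bounded), we get $t_i(h_K(v)-\pr{v}{x_i}) = O(1)$, and dividing by $t_i \to \infty$ yields $\pr{v}{x_i}\to h_K(v)$. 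Now Lemma~\ref{lem:nopt-conv} produces $y_i \in F_v(K)$ with $\|x_i - y_i\|\to 0$ (take $y_i$ to be the projection of $x_i$ onto the compact set $F_v(K)$). Then
\[
h_K(s_i) = t_i\pr{v}{x_i} + \pr{s_i - t_iv}{x_i} \;\le\; t_ih_K(v) + \pr{s_i-t_iv}{y_i} + \pr{s_i-t_iv}{x_i-y_i},
\]
where I used $\pr{v}{x_i} \le h_K(v)$. The middle term is bounded above by $h_{F_v(K)}(s_i-t_iv)\to h_{F_v(K)}(w)$, and the last term is bounded by $\|s_i - t_iv\|\,\|x_i-y_i\|$, a product of a bounded sequence and a null sequence, hence $o(1)$. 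This yields the upper half of \eqref{four} for $i$ large.

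The main obstacle I anticipate is the upper bound: $x_i$ generally does not lie on $F_v(K)$, and the naive estimate $\pr{s_i}{x_i - y_i}\le \|s_i\|\,\|x_i-y_i\|$ is useless because $\|s_i\|\to\infty$. The crucial move is to split $s_i = t_iv + (s_i-t_iv)$ and route the "large" part $t_iv$ through the inequality $\pr{v}{x_i}\le h_K(v)$, leaving only the bounded perturbation $s_i-t_iv$ multiplied against the small quantity $\|x_i-y_i\|$. Establishing $\pr{v}{x_i}\to h_K(v)$ via the previously proven lower bound — so that Lemma~\ref{lem:nopt-conv} becomes available — is the linchpin of that argument.
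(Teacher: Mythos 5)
Your proof is correct and follows essentially the same route as the paper's: take a maximizer $x_i$ of $\pr{s_i}{\cdot}$ over $K$, project it onto $F_v(K)$, split $s_i = t_iv + (s_i - t_iv)$, and bound the cross term by $\|s_i - t_iv\|\,\|x_i - y_i\|$ while routing the large part $t_iv$ through $\pr{v}{x_i}\leq h_K(v)$. The only cosmetic difference is how you obtain $d(F_v(K),x_i)\to 0$: the paper invokes Lemma~\ref{lem:dir-conv} (using $s_i/t_i\to v$ and continuity of $h_K$), whereas you rederive $\pr{v}{x_i}\to h_K(v)$ from your already-established lower bound and then apply Lemma~\ref{lem:nopt-conv} directly --- same content, same conclusion.
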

\begin{proof}
By \ref{eq:2step-cd} (a,b) we have that
\begin{equation}
\label{eq:2step-dir}
\lim_{i \rightarrow \infty} \frac{s_i}{t_i} = v
\end{equation}
and that we may pick  $N_1 \geq 0$ such that
\begin{equation}
\label{eq:2step-pd}
\|s_i - t_iv\| \leq \|w\|+1 \leq C \quad \text{ for } i \geq N_1.
\end{equation}
Let $(x_i)_{i=1}^\infty$ be any sequence such that $x_i \in F_{s_i}(K)=F_{s_i/t_i}(K)$. For each $i \geq 1$, let $\tilde{x}_i =
\argmin_{y \in F_v(K)} \|x_i-y\|$. By (\ref{eq:2step-dir}) and Lemma \ref{lem:dir-conv}, we may pick $N_2 \geq 0$
such that
\begin{equation}
\label{eq:2step-gc}
d(F_v(K), x_i) = \|x_i-\tilde{x}_i\| \leq \frac{\eps}{2C} \quad \text{ for } i \geq N_2.
\end{equation}
Since $h_{F_v(K)}$ is a continuous function, we may pick $N_3 \geq 0$ such that
\begin{equation}
\label{eq:2step-of}
|h_{F_v(K)}(s_i - t_iv) - h_{F_v(K)}(w)| \leq \frac{\eps}{2} \quad \text{ for }i \geq N_3.
\end{equation}

Let $N_\eps = \max \set{N_1,N_2,N_3}$. Now since $x_i \in F_{s_i}(K)$ and $\tilde{x}_i \in F_v(K)$ we have that
\begin{equation}
\label{eq:2step-optc}
\pr{x_i}{s_i} \geq \pr{\tilde{x}_i}{s_i} \quad \text{ and } \pr{\tilde{x}_i}{t_iv} \geq \pr{x_i}{t_iv}.
\end{equation}
From (\ref{eq:2step-pd}), (\ref{eq:2step-gc}), (\ref{eq:2step-optc}) we get that for $i \geq N_\eps$
\begin{equation}
\label{eq:2step-nopt}
\begin{split}
\pr{x_i}{s_i} - \pr{\tilde{x}_i}{s_i} &\leq \pr{x_i}{s_i} - \pr{\tilde{x}_i}{s_i} +
			\pr{\tilde{x}_i}{t_iv} - \pr{x_i}{t_iv} = \pr{x_i-\tilde{x}_i}{s_i-t_iv} \\
	      &\leq \|x_i-\tilde{x}_i\|\|s_i-t_iv\| \leq \left(\frac{\eps}{2C}\right)C = \frac{\eps}{2}.
\end{split}
\end{equation}
From (\ref{eq:2step-nopt}) we see that for $i \geq N_\eps$
\begin{equation}
\label{eq:2step-nopt2}
h_K(s_i) \geq h_{F_v(K)}(s_i) \geq \pr{s_i}{\tilde{x}_i} \geq \pr{s_i}{x_i} - \frac{\eps}{2} = h_K(s_i) - \frac{\eps}{2}.
\end{equation}
Since $\pr{v}{\cdot}$ is constant on $F_v(K)$, we have that
\begin{equation}
\label{eq:2step-od}
h_{F_v(K)}(s_i) = h_{F_v(K)}(s_i-t_iv+t_iv) = h_{F_v(K)}(s_i-t_iv) + t_ih_{F_v(K)}(v) = h_{F_v(K)}(s_i-t_iv) + t_ih_K(v)
\end{equation}
Combining (\ref{eq:2step-of}), (\ref{eq:2step-nopt2}) and (\ref{eq:2step-od}) we get that for $i \geq N_\eps$,
\[
h_K(s_i) + \eps \geq t_ih_K(v) + h_{F_v(K)}(w) \geq h_K(s_i) - \eps
\]
as needed.
\end{proof}
\begin{thm}[Dirichlet's Approximation Theorem] Let $(\alpha_1,\ldots,\alpha_l) \in \R^l$. Then for every positive integer $N$, there exists $1 \leq n \leq N$ such that
$ \max_{1 \leq i \leq l} |n\alpha_i - \round{n\alpha_i}| \leq 1/N^{1/l}$
\label{lem:dir-approx}
\end{thm}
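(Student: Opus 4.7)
This is a classical statement and I would prove it by the standard pigeonhole argument on fractional parts. For each integer $n \geq 0$, set $P_n := (\{n\alpha_1\}, \ldots, \{n\alpha_l\}) \in [0,1)^l$, where $\{x\} := x - \lfloor x \rfloor$ denotes the fractional part. The strategy is to produce two indices $0 \leq a < b \leq N$ such that $P_a$ and $P_b$ are close coordinate-wise; then setting $n := b - a$ and $m_i := \lfloor b \alpha_i \rfloor - \lfloor a \alpha_i \rfloor$ yields $n\alpha_i - m_i = \{b\alpha_i\} - \{a\alpha_i\}$, and since $\round{n\alpha_i}$ is by definition the nearest integer to $n\alpha_i$, we get $|n\alpha_i - \round{n\alpha_i}| \leq |n\alpha_i - m_i|$.

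To arrange the close pair, I partition $[0,1)^l$ into axis-aligned half-open cubes of side $1/M$ where $M$ is chosen so that each cube has side at most $1/N^{1/l}$ and the total number of cubes does not exceed $N$. The natural choice is $M = \lceil N^{1/l} \rceil$, so that $1/M \leq 1/N^{1/l}$; applying pigeonhole to the $N+1$ points $P_0, \ldots, P_N$ then forces two of them to lie in the same cube, giving $|\{b\alpha_i\} - \{a\alpha_i\}| \leq 1/M \leq 1/N^{1/l}$ for every coordinate $i$. Combined with the reduction in the previous paragraph, this produces $n \in \{1, \ldots, N\}$ with $|n\alpha_i - \round{n\alpha_i}| \leq 1/N^{1/l}$, as required.

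The only genuinely delicate point is reconciling the cube count $M^l$ with the point count $N+1$ so that pigeonhole forces a collision while keeping $n \leq N$: when $N$ is a perfect $l$-th power the choice $M = N^{1/l}$ matches both constraints exactly, and in general a minor bookkeeping step (either working with cells of slightly unequal widths that partition $[0,1)^l$ into exactly $N$ pieces of diameter $\leq 1/N^{1/l}$, or invoking the result for the largest perfect $l$-th power below $N$ and checking the boundary cases) handles the residual situation. Beyond this discretization detail, every step is routine, so I expect the cube-counting bookkeeping to be the only place where care is needed.
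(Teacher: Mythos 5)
The core of your argument --- the collision of the fractional-part vectors $P_a,P_b$, then $n=b-a$ and the bound $|n\alpha_i-\round{n\alpha_i}|\le|n\alpha_i-m_i|$ --- is the standard pigeonhole proof, and it does prove the theorem when $N$ is a perfect $l$-th power. But the step you defer as ``minor bookkeeping'' is in fact the entire difficulty for general $N$, and neither of your proposed patches closes it. First, for $l\ge 2$ and $N$ not a perfect $l$-th power there is \emph{no} partition of $[0,1)^l$ into at most $N$ cells each of sup-norm diameter at most $N^{-1/l}$: each such cell is contained in a cube of side $N^{-1/l}$, hence has volume at most $1/N$, so all $N$ cells would have to be exact cubes of side $N^{-1/l}$; on the other hand, with $m=\ceil{N^{1/l}}$ one can place $m^l>N$ points of $[0,1)^l$ (coordinates in $\set{0,t,\dots,(m-1)t}$ for a suitable $t>N^{-1/l}$ with $(m-1)t<1$) that are pairwise more than $N^{-1/l}$ apart in the sup-norm, and no family of $N$ cubes of side $N^{-1/l}$ can cover them. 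Second, invoking the result for the largest $l$-th power below $N$, i.e.\ for $Q^l$ with $Q=\floor{N^{1/l}}$, yields some $1\le n\le Q^l\le N$ with error at most $1/\floor{N^{1/l}}$, which is \emph{weaker} than the claimed $1/N^{1/l}$ whenever $N$ is not a perfect $l$-th power; no boundary-case check upgrades it.

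So as written your proof establishes only the classical pigeonhole form (for every integer $Q\ge 1$ there is $1\le n\le Q^l$ with $\max_i|n\alpha_i-\round{n\alpha_i}|<1/Q$), not the statement of Theorem~\ref{lem:dir-approx} for arbitrary $N$. The sharp form with bound $N^{-1/l}$ and $n\le N$ is usually deduced from Minkowski's convex body theorem (compact case): apply it to the symmetric convex body $\set{(x_0,x_1,\dots,x_l)\,:\, |x_0|\le N,\ |\alpha_i x_0-x_i|\le N^{-1/l},\ 1\le i\le l}$, whose volume is exactly $2^{l+1}$, obtain a nonzero integer point, and note that $x_0\ne 0$ (else $|x_i|\le N^{-1/l}<1$ forces $x=0$ when $N\ge 2$; the case $N=1$ is trivial since $|\alpha_i-\round{\alpha_i}|\le 1/2\le 1$). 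For comparison, the paper offers no proof at all: Theorem~\ref{lem:dir-approx} is quoted as a classical fact, and its only use, in Proposition~\ref{lem:lift2}, is to produce $(s_i,t_i)$ with $t_i\to\infty$ and $\|s_i-t_iv\|\to 0$, for which your weaker pigeonhole version already suffices. But to prove the theorem exactly as stated you need Minkowski (or an equivalent argument), not a repartition of the unit cube.
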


\begin{prop} Let $K \subseteq \R^n$ be  a compact and convex set, $v \in \R^n$ and $w \in \Z^n$. Then $\exists w' \in
\Z^n$ such that $\set{x: \pr{w'}{x} \leq \floor{h_K(w')}} \cap \aff_I(H_v(K))  \subseteq \set{x: \pr{w}{x} \leq \floor{h_{F_v(K)}(w)}}\cap \aff_I(H_v(K))$.
\label{lem:lift2}
\end{prop}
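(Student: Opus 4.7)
The plan is to construct $w'$ of the form $w' = w + u$, where $u \in \Z^n$ is obtained from Dirichlet's Approximation Theorem chosen so that (a) $u$ lies in the orthogonal complement $V_I^\perp$ of the linear subspace $V_I$ parallel to $\aff_I(H_v(K))$, and (b) $u$ closely approximates $tv$ for some large positive integer $t$. Property (b) will let me apply Lemma \ref{lem:2step-conv} to approximate $h_K(w')$ by $t\,h_K(v) + h_{F_v(K)}(w)$, while property (a), together with the fact that $v \in V_I^\perp$, will ensure that the linear functional $\langle w',\cdot\rangle$ agrees with $\langle w,\cdot\rangle$ on $\aff_I(H_v(K))$ up to an integer additive constant. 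These two properties together convert the CG cut $\langle w',x\rangle \leq \lfloor h_K(w')\rfloor$ into $\langle w,x\rangle \leq \lfloor h_{F_v(K)}(w)\rfloor$ on $\aff_I(H_v(K))$.

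After disposing of the trivial cases ($v = 0$, in which case $w' = w$ works, and $\aff_I(H_v(K)) = \emptyset$, in which case the conclusion is vacuous), fix $z_0 \in \aff(H_v(K)) \cap \Z^n$ and set $V_I := \aff_I(H_v(K)) - z_0$. Since $V_I$ is spanned by integer vectors, it is a rational subspace, and because $\langle v, x-x'\rangle = 0$ for any $x,x' \in \aff(H_v(K))$, we have $V_I \subseteq v^\perp$, so $v \in V_I^\perp$. Pick an integer basis $b_1,\ldots,b_{n-k}$ of $V_I^\perp \cap \Z^n$ (with $k = \dim V_I$), write $v = \sum_j \alpha_j b_j$, and apply Theorem \ref{lem:dir-approx} to produce positive integers $t_i \to \infty$ together with vectors $u_i := \sum_j \round{t_i \alpha_j}\, b_j \in V_I^\perp \cap \Z^n$ satisfying $\|u_i - t_i v\| \to 0$. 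Setting $s_i := w + u_i \in \Z^n$ gives $s_i - t_i v \to w$, so Lemma \ref{lem:2step-conv} yields $|h_K(s_i) - t_i h_K(v) - h_{F_v(K)}(w)| \to 0$. Moreover, for any $x \in \aff_I(H_v(K))$, writing $x = z_0 + y$ with $y \in V_I$ and using $\langle u_i,y\rangle = 0$, we obtain $\langle s_i,x\rangle = \langle w,x\rangle + \langle u_i,z_0\rangle$, where $\langle u_i,z_0\rangle \in \Z$ satisfies $\langle u_i,z_0\rangle - t_i h_K(v) = \langle u_i - t_i v, z_0\rangle \to 0$. Consequently $\xi_i := h_K(s_i) - \langle u_i,z_0\rangle - h_{F_v(K)}(w) \to 0$, and integrality of $\langle u_i,z_0\rangle$ gives $\lfloor h_K(s_i)\rfloor = \langle u_i,z_0\rangle + \lfloor h_{F_v(K)}(w) + \xi_i\rfloor$.

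To finish, I pick $i$ large enough that $\lfloor h_{F_v(K)}(w) + \xi_i\rfloor \leq \lfloor h_{F_v(K)}(w)\rfloor$ and take $w' := s_i$: then $\langle w',x\rangle \leq \lfloor h_K(w')\rfloor$ rewrites as $\langle w,x\rangle + \langle u_i,z_0\rangle \leq \langle u_i,z_0\rangle + \lfloor h_{F_v(K)}(w)\rfloor$, i.e.\ $\langle w,x\rangle \leq \lfloor h_{F_v(K)}(w)\rfloor$, as required. A short case split handles the floor inequality: if $h_{F_v(K)}(w) \notin \Z$ then $|\xi_i| < \min(\{h_{F_v(K)}(w)\}, 1-\{h_{F_v(K)}(w)\})$ preserves the floor exactly, while if $h_{F_v(K)}(w) \in \Z$ then $|\xi_i| < 1$ forces $\lfloor h_{F_v(K)}(w) + \xi_i\rfloor \in \{h_{F_v(K)}(w) - 1, h_{F_v(K)}(w)\}$, which is still bounded by $\lfloor h_{F_v(K)}(w)\rfloor$. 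The main obstacle is precisely this floor bookkeeping: Lemma \ref{lem:2step-conv} only controls $h_K(s_i)$ up to $\eps$, and naively $\lfloor t_i h_K(v) + h_{F_v(K)}(w) + O(\eps)\rfloor$ cannot be expected to stabilize since $t_i h_K(v)$ may be arbitrarily irrational and grows without bound. The trick that resolves this (and the reason for passing to $\aff_I(H_v(K))$ rather than $\aff(H_v(K))$) is that the large ``$t_i h_K(v)$'' contribution gets absorbed into the integer $\langle u_i,z_0\rangle$, concentrating all fractional uncertainty into the single bounded quantity $h_{F_v(K)}(w) + \xi_i$.
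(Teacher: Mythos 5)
Your proof is correct, and while it shares the paper's overall skeleton (Dirichlet approximation of $t v$ by an integer vector, then Lemma \ref{lem:2step-conv} to control $h_K(w+u)$ for the lifted cut), the mechanism at the crucial step is genuinely different. The paper first rescales $v$ so that $h_K(v)\in\Z$, takes arbitrary integer Dirichlet approximants $s_i\approx t_i v$, and then shows $\pr{z_j}{w+s_i}=\pr{z_j}{t_i v+w}$ on affinely spanning integer points $z_j$ of $\aff_I(H_v(K))$ by the observation that two integers within $1/2$ of each other coincide; the integrality of $t_i h_K(v)$ is then what lets it pull the unbounded term out of the floor. You skip the normalization entirely: you use the rationality of the direction space $V_I$ of $\aff_I(H_v(K))$, run Dirichlet in an integer basis of the lattice $V_I^\perp\cap\Z^n$ so that the perturbation $u_i$ is exactly orthogonal to $V_I$, and let the integer $\pr{u_i}{z_0}$ (rather than $t_i h_K(v)$) absorb the unbounded part of $h_K(w')$, reducing the floor bookkeeping to the bounded quantity $h_{F_v(K)}(w)+\xi_i$. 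Your route costs a little more lattice machinery (existence of an integer basis of $V_I^\perp\cap\Z^n$, which does hold since $V_I$ is spanned by integer vectors) but buys exact constancy of $\pr{u_i}{\cdot}$ on $\aff_I(H_v(K))$ and removes both the scaling of $v$ and the $1/2$-separation argument. One small remark: like the paper, you assert that Dirichlet yields $t_i\to\infty$ together with $\|u_i-t_i v\|\to 0$ without comment; this needs the standard aside that if the multipliers stayed bounded then some fixed $t$ would give zero error (so $tv$ is an integer vector in your basis) and its multiples can be used instead — the paper glosses this at exactly the same level, so it is not a gap in your argument.
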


\begin{proof}
First, by possibly multiplying $v$ by a positive scalar we may assume that $h_K(v) \in \Z$. Let $S = \aff_I(H_v(K))$
. We may assume that $S \neq \emptyset$, since otherwise the statement is trivially true.

From Theorem~\ref{lem:dir-approx} for any $v \in \R^n$ there exists  $(s_i,t_i)_{i=1}^\infty$, $s_i \in \Z^n$, $t_i \in \N$ such that (a.) $t_i \rightarrow \infty$ and (b.) $\|s_i - t_iv\| \rightarrow 0$.  Now define the sequence $(w_i,t_i)_{i=1}^\infty$, where $w_i = w + s_i$, $i \geq 1$. Note that the sequence $(w_i,t_i)$ satisfies \eqref{eq:2step-cd} and hence by Lemma \ref{lem:2step-conv} for any $\eps > 0$, there exists $N_\eps$ such that \eqref{four} holds.  Let $\eps = \frac{1}{2}\bigl(1-(h_{F_v(K)}(w)-\lfloor h_{F_v(K)}(w)\rfloor)\bigr)$, and let $N_1 = N_\eps$. Note that $\floor{h_{F_v(K)}(w) + \eps} = \floor{h_{F_v(K)}(w)}$. Hence, since $h_K(v) \in \Z$ by assumption, for all $i \geq N_1$ we have that
\begin{equation*}
\floor{h_K(w_i)} \leq \floor{t_ih_K(v) + h_{F_v(K)}(w) + \eps} = t_ih_K(v) + \floor{h_{F_v(K)}(w) + \eps}
= t_ih_K(v) + \floor{h_{F_v(K)}(w)}.
\end{equation*}
Now pick $z_1,\dots,z_k \in S$ such that $\aff(z_1,\dots,z_k) = S$ and let  $R = \max \set{\|z_j\|: 1 \leq j \leq k}$. Choose $N_2$ such that $\|w_i - t_i v - w\| \leq \frac{1}{2R}$ for $i \geq N_2$. Now note that for $i \geq N_2$,
\begin{equation*}
|\pr{z_j}{w_i} - \pr{z_j}{t_iv + w}| = |\pr{z_j}{w_i - t_iv - w}| \leq \|z_j\|\|w_i - t_iv-w\| \leq
R \frac{1}{2R} = \frac{1}{2} \quad \forall j \in\{1,\dots,k\}.
\end{equation*}
Next note that since $z_j, w_i \in \Z^n$, $\pr{z_j}{w_i} \in \Z$. Furthermore, $t_i \in \N$, $\pr{v}{z_i} = h_K(v) \in \Z$ and $w \in \Z^n$ implies that $\pr{z_j}{t_i v + w} \in \Z$. Given this, we must have  $\pr{z_j}{w_i} = \pr{z_j}{t_i v + w} \quad \forall j \in [k],\,  i\geq 1$ and hence we get  $\pr{x}{w_i} = \pr{x}{t_i v + w} \quad \forall x \in S,\,  i\geq 1$.

Let $w' = w_i$ where $i = \max \set{N_1,N_2}$. Now examine the set $L = \set{x: \pr{x}{w'} \leq \floor{h_K(w')}} \cap S$. Here we get that $\pr{x}{w_i} \leq t_ih_K(v) +  \lfloor h_{F_v(K)}(w) \rfloor$ and $ \pr{x}{v} = h_K(v)$ for all $x \in L$
Hence, we see that $\pr{x}{w_i-t_iv} \leq \lfloor h_{F_v(K)}(w)\rfloor$ for all  $x \in L$.
Furthermore, since $\pr{x}{w_i-t_iv} = \pr{x}{w}$ for all $x \in L \subseteq \aff(S)$, we have that $\pr{x}{w} \leq \floor{h_{F_v(K)}(w)}$ for all $x \in L$, as needed.
\end{proof}

\subsection{Separating all point in $F_v \setminus \aff_I(H_v)$}\label{sec:sepirrational}

Replacing $\aff(F)$ by $\aff_I(H_v)$ in the generalization of \eqref{toprovelifting} strengthens property by replacing $F$ with $H_v$, but weakens it by replacing $\aff(\cdot)$ by $\aff_I(\cdot)$. Because of this we need to explicitly deal with the points  in $F_v\setminus \aff_I(H_v)$. In this section, we show that points in $F_v\setminus \aff_I(H_v)$ can be separated by using a finite number of CG cuts in Proposition \ref{prop:killirr}. In  prove this, we need the Kronecker simultaneous approximation theorem that is stated next. See Niven~\cite{niven1963} or Cassels~\cite{cassels1972} for a proof.
\begin{thm} Let $(x_1,\dots,x_n) \in \R^n$ be such that the numbers $x_1,\dots,x_n,1$ are linearly independent
over $\Q$. Then the set $\set{(nx_1 \pmod 1, \dots, nx_n \pmod 1): n \in \N}$ is dense in $[0,1)^n$.
\label{thm:dense}
\end{thm}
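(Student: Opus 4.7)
The plan is to identify $[0,1)^n$ with the $n$-torus $T^n=\R^n/\Z^n$ and prove the strictly stronger statement that the orbit $\{m\alpha \bmod 1 : m\in\N\}$, where $\alpha=(x_1,\dots,x_n)$, is \emph{equidistributed} with respect to Lebesgue measure on $T^n$. Density in $[0,1)^n$ is an immediate consequence, since any nonempty open set has positive Lebesgue measure and so must be visited by a sequence whose visits are asymptotically distributed according to that measure.

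The main tool is Weyl's criterion: a sequence $(y_m)_{m\ge 1}\subseteq T^n$ is equidistributed iff, for every nonzero $k\in\Z^n$, the exponential averages
\[
\frac{1}{N}\sum_{m=1}^{N}e^{2\pi i\pr{k}{y_m}}
\]
tend to $0$ as $N\to\infty$. Applied to $y_m=m\alpha$, each such sum is a geometric series in the ratio $r_k=e^{2\pi i\pr{k}{\alpha}}$, bounded in absolute value by $2/(N|1-r_k|)$, which vanishes as $N\to\infty$ provided $\pr{k}{\alpha}\notin\Z$. The hypothesis that $x_1,\dots,x_n,1$ are linearly independent over $\Q$ is exactly the statement that $k_1x_1+\cdots+k_nx_n\in\Z$ with $k\in\Z^n$ forces $k=0$, so the criterion holds for every nonzero $k$.

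The remaining point is to justify the passage from these character sums to equidistribution against arbitrary continuous test functions. This follows from Stone--Weierstrass: the trigonometric polynomials---finite linear combinations of the characters $\chi_k(y)=e^{2\pi i\pr{k}{y}}$---are dense in $C(T^n)$ in the sup norm, and both sides of the equidistribution statement are linear and continuous in the test function. The main (small) obstacle is really just naming the right framework; once one sees that the hypothesis is equivalent to $\chi_k(\alpha)\ne 1$ for every $k\ne 0$, the rest is a standard character-theoretic argument. An alternative route would bypass equidistribution altogether: show that the orbit closure is a closed subgroup $H\subseteq T^n$, then use the classification of closed subgroups of $T^n$ by their annihilators in $\Z^n$ to conclude that the trivial annihilator forced by the hypothesis yields $H=T^n$.
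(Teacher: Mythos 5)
Your argument is correct, but there is nothing in the paper to compare it against: the paper states this result (Kronecker's simultaneous approximation theorem) without proof and simply cites Niven and Cassels, so you are supplying an argument the paper omits. The Weyl-criterion route you choose is sound and standard. The hypothesis that $x_1,\dots,x_n,1$ are linearly independent over $\Q$ is exactly the statement that $\pr{k}{\alpha}\notin\Z$ for every nonzero $k\in\Z^n$, where $\alpha=(x_1,\dots,x_n)$; the geometric-series bound $\bigl|\frac{1}{N}\sum_{m=1}^{N}e^{2\pi i m\pr{k}{\alpha}}\bigr|\le \frac{2}{N\,|1-e^{2\pi i\pr{k}{\alpha}}|}$ then annihilates every nontrivial character average, and Stone--Weierstrass upgrades this to equidistribution on the torus, which is strictly stronger than the density the paper needs. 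Two small points are worth making explicit in a full write-up: (i) to pass from equidistribution to density, test against a continuous bump function $0\le f\le 1$ supported in the target open set with $\int f>0$ (equivalently, invoke the portmanteau lower bound for open sets); and (ii) the theorem asserts density in $[0,1)^n$ with its Euclidean topology rather than in the torus, so for a target point having some coordinate equal to $0$ you should approximate it by a small open box contained in $(0,1)^n$, which is open in both topologies --- both remarks are routine. Compared with the classical proofs in the cited references (e.g.\ the inductive, analysis-free proof of Kronecker's theorem in Cassels), your character-sum argument is shorter, avoids case analysis, and yields the stronger equidistribution statement; your alternative sketch via the classification of closed subgroups of the torus by their annihilators in $\Z^n$ would also work and is the other standard route.
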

The following lemmas  conveniently normalize vector $v$ defining $F_v$ and $H_v$.

\begin{lem} Let $K \subseteq \R^n$ be a closed convex set, and let $T:\R^n \rightarrow \R^n$ be an invertible linear
transformation. Then $h_K(v) = h_{TK}(T^{-t}v)$  and $F_v(K) = T^{-1}(F_{T^{-t} v}(TK))$  for all $v \in \R^n$.
Furthermore, if $T$ is a unimodular transformation, then $ CC(K) = T^{-1}(CC(TK))$.
\label{lem:unimod}
\end{lem}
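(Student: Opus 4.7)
The three claims are all direct consequences of the identity $\langle T^{-t}v, Tx\rangle = \langle v, x\rangle$ for all $v,x \in \R^n$, which follows since $T^{-t}v = (T^{-1})^t v$. My plan is to chain this identity through the definitions of the support function, the exposed face, and the CG closure.

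First, for the support function identity, I would write
\[
h_{TK}(T^{-t}v) = \sup\{\langle T^{-t}v, y\rangle : y \in TK\} = \sup\{\langle T^{-t}v, Tx\rangle : x \in K\} = \sup\{\langle v,x\rangle : x\in K\} = h_K(v),
\]
using the bijection $y = Tx$ between $TK$ and $K$ induced by $T$. This in turn gives the exposed face identity almost immediately: $y \in F_{T^{-t}v}(TK)$ iff $y = Tx$ for some $x\in K$ with $\langle T^{-t}v, Tx\rangle = h_{TK}(T^{-t}v)$, which by the identity and the first part is equivalent to $x\in K$ and $\langle v,x\rangle = h_K(v)$, i.e.\ $x \in F_v(K)$. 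Hence $F_{T^{-t}v}(TK) = T F_v(K)$, so $F_v(K) = T^{-1} F_{T^{-t}v}(TK)$.

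For the CG closure, I would apply $T^{-1}$ to the defining intersection of $\CC(TK)$ and rewrite each halfspace. The constraint $\langle a, y\rangle \leq \floor{h_{TK}(a)}$ pulled back through $y = Tx$ becomes $\langle T^t a, x\rangle \leq \floor{h_{TK}(a)}$; and setting $v = T^t a$ in the support function identity (so $a = T^{-t}v$) yields $h_{TK}(a) = h_K(T^t a)$. Therefore
\[
T^{-1}\CC(TK) \;=\; \bigcap_{a\in\Z^n} \{x : \langle T^t a, x\rangle \leq \floor{h_K(T^t a)}\} \;=\; \bigcap_{w \in T^t\Z^n} \{x : \langle w, x\rangle \leq \floor{h_K(w)}\}.
\]
The hard (but tiny) step is the last one: when $T$ is unimodular, $T$ (and therefore $T^t$) is an integer matrix with integer inverse, so the map $a \mapsto T^t a$ is a bijection of $\Z^n$ onto itself, giving $T^t\Z^n = \Z^n$. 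The right-hand side of the display then equals $\CC(K)$, proving $\CC(K) = T^{-1}\CC(TK)$. No genuine obstacle arises; the proof is a careful but routine unwinding of definitions, and the unimodularity hypothesis is used only in this last bijection step.
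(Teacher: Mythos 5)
Your proposal is correct and follows essentially the same route as the paper: unwind the definitions using the adjoint identity $\langle T^{-t}v,Tx\rangle=\langle v,x\rangle$, deduce the support function and exposed face identities, and then reindex the intersection of CG half-spaces using the fact that a unimodular $T$ (hence $T^t$, hence $T^{-t}$) maps $\Z^n$ bijectively onto itself — the paper does this via the substitution $v\mapsto T^{-t}v$ while you use $a\mapsto T^t a$, which is the same bijection argument. If anything, you make the lattice-bijection step more explicit than the paper does, which is a virtue rather than a discrepancy.
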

\begin{proof} Observe that
\[
h_{TK}(T^{-t}v) = \sup_{x \in TK} \pr{T^{-t}v}{x} = \sup_{x \in K} \pr{T^{-t}v}{Tx} = \sup_{x \in K} \pr{v}{x} = h_K(v).
\]
Now note that
\begin{align*}
T^{-1}(F_{T^{-t} v}(TK)) &= T^{-1} \left(~\set{x: ~x \in TK, ~h_{TK}(T^{-t}v) = \pr{T^{-t}v}{x}}~\right) \\
			 &= \set{x: ~Tx \in TK,~ h_{TK}(T^{-t}v) = \pr{T^{-t}v}{Tx}}
                            = \set{x: ~x \in K, ~h_K(v) = \pr{v}{x}} \\
                         &= F_v(K).
\end{align*}
Finally,
\begin{align*}
T^{-1}(CC(TK)) &= T^{-1}\left(\set{x: x \in TK, ~\pr{v}{x} \leq \floor{h_{TK}(v)} ~ \forall ~ v \in \Z^n}\right) \\
               &= \set{x: Tx \in TK, ~\pr{v}{Tx} \leq \floor{h_{TK}(v)} ~ \forall ~ v \in \Z^n} \\
               &= \set{x: Tx \in TK, ~\pr{T^{-t}v}{Tx} \leq \floor{h_{TK}(T^{-t}v)} ~ \forall v \in \Z^n} \\
               &= \set{x: x \in K, ~\pr{v}{x} \leq \floor{h_{K}(v)} ~ \forall v \in \Z^n} = CC(K).
\end{align*}
\end{proof}

\begin{lem} Take $v \in \R^n$. Then there exists an unimodular transformation $T: \R^n \rightarrow \R^n$ and $\lambda
\in \Q_{> 0}$ such that for $v' = \lambda T v$ we get that
\begin{equation}
\label{eq:irr-red}
v' =
  \left(~\underbrace{0,\dots,0}_{t ~ \mathrm{times}}, ~\underbrace{1}_{s ~ \mathrm{times}},~\alpha_1,\dots,\alpha_{r}\right),
\end{equation}
where $t,r \in \Z_+$, $s \in \set{0,1}$, and $\{1,\alpha_1,\dots,\alpha_r\}$ are linearly independent over $\Q$.
Furthermore, we have that $\mathcal{D}(v) = \inf \set{\dim(W): v \in W, W = \set{x \in \R^n: Ax = 0}, A \in \Q^{m \times n}} = s + r$.
\label{lem:irr-red}
\end{lem}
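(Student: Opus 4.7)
The plan is to expose the $\Q$-linear structure of $v$ and then realize the canonical form by one integer row operation followed by a rational scaling. First fix a $\Q$-basis $\{1,\alpha_1,\dots,\alpha_r\}$ of the smallest $\Q$-subspace of $\R$ containing $1$ together with every coordinate of $v$; this yields a unique decomposition $v = q + M\alpha$ with $q \in \Q^n$ and $M \in \Q^{n \times r}$ of rank $r$. Let $V_v := \mathrm{span}_\Q(v_1,\dots,v_n)$. Set $s = 1$ if $1 \in V_v$ (equivalently, $q$ is not in the $\Q$-column span of $M$) and $s = 0$ otherwise; put $t = n - (r+s)$. These are the block sizes appearing in the target vector $v'$.

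Next, I would zero out $t$ coordinates. Since $\{1,\alpha_1,\dots,\alpha_r\}$ is $\Q$-linearly independent, a vector $c \in \Z^n$ satisfies $\pr{c}{v} = 0$ if and only if $c[q \mid M] = 0$; hence $L := \{c \in \Z^n : \pr{c}{v} = 0\}$ is the integer left kernel of the rational matrix $[q \mid M]$, has $\Z$-rank $n - \mathrm{rank}([q \mid M]) = t$, and is saturated in $\Z^n$ (if $kc \in L$ with $k \neq 0$, then $c \in L$). A saturated sublattice of $\Z^n$ extends to a $\Z$-basis of $\Z^n$ by the structure theorem for finitely generated abelian groups, so I would collect such an extension as the rows of $T_1 \in \mathrm{GL}_n(\Z)$ with an $L$-basis on top. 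Then $(T_1 v)_i = 0$ for $i \leq t$, and the remaining coordinates $u_1,\dots,u_{r+s}$ form a $\Q$-basis of $V_v$: a $\Q$-dependence among them would yield an extra element of $L$ inconsistent with the unimodular basis.

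Finally, I would normalize the surviving coordinates and compute $\mathcal{D}(v)$. When $s = 0$ we have $V_v \cap \Q = \{0\}$, so $\{1, u_1, \dots, u_r\}$ is automatically $\Q$-independent and the choice $T_2 = I_r$, $\lambda = 1$, $\alpha_i := u_i$ already produces the form. When $s = 1$, expand $1 = \sum_i e_i u_i$ with $e_i \in \Q$, clear denominators, and divide by the gcd to obtain a primitive vector $a \in \Z^{r+s}$ and a $\lambda \in \Q_{>0}$ with $\pr{a}{u} = 1/\lambda$. Since $a$ is primitive, it may be taken as the first row of some $T_2 \in \mathrm{GL}_{r+s}(\Z)$; the first entry of $\lambda T_2 u$ is then $1$, and the remaining entries $\lambda (T_2 u)_{j+1}$ together with $1$ still form a $\Q$-basis of $V_v$, so the $\Q$-independence of $\{1, \alpha_1, \dots, \alpha_r\}$ is automatic. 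The required transformation is $T = \mathrm{diag}(I_t, T_2) T_1 \in \mathrm{GL}_n(\Z)$. For the last claim, a rational subspace $\ker A$ contains $v$ iff $A[q \mid M] = 0$, so the smallest such subspace is the $\R$-column span of $[q \mid M]$, of dimension $\mathrm{rank}_\Q([q \mid M]) = r + s$.

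The two delicate points are the primitivity assertions --- that $L$ is saturated and that the gcd-cleared vector $a$ is primitive --- both of which reduce to the standard fact that direct summands of $\Z^n$ are precisely the saturated sublattices. The rest of the bookkeeping amounts to carefully tracking which scalars lie in $\Q$ and which in its irrational complement inside $V_v$.
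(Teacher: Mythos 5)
Your proof is correct, but it takes a genuinely different route from the paper. The paper proceeds algorithmically: it permutes the coordinates so the rational entries form a block, then repeatedly extracts a rational dependency among $(1,a'_{k+1},\dots,a'_n)$, applies the Euclidean algorithm to the integer coefficient vector of that dependency to build a unimodular map that converts one irrational coordinate into a rational one, and finally uses the Euclidean algorithm once more (after scaling to a primitive integer block) to compress the rational block to $(0^{k-1},1)$; the claim $\mathcal{D}(v)=s+r$ is then proved by first showing $\mathcal{D}$ is invariant under $v\mapsto \lambda Tv$ and computing it for the canonical form. You instead work structurally: you fix a $\Q$-basis $\{1,\alpha_1,\dots,\alpha_r\}$ of $\mathrm{span}_\Q(1,v_1,\dots,v_n)$, write $v=q+M\alpha$, identify the integer kernel $L=\{c\in\Z^n:\pr{c}{v}=0\}$ as a saturated rank-$t$ sublattice, extend an $L$-basis to a $\Z$-basis of $\Z^n$ to kill all $t$ coordinates in one step, and then complete a primitive vector representing $1\in V_v$ to a unimodular $T_2$ to produce the ``$1$'' coordinate; you also get $\mathcal{D}(v)=r+s$ directly as $\mathrm{rank}_\Q[q\,|\,M]$, with no invariance detour. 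Both arguments rest on the same underlying fact (primitive vectors/saturated sublattices extend to bases of $\Z^n$, which is what the paper's Euclidean-algorithm steps implement by hand), but yours defines $t,s,r$ intrinsically as invariants of $v$ up front and avoids the iteration bookkeeping, at the price of invoking the direct-summand characterization of saturated sublattices, whereas the paper's version is more elementary and explicitly constructive. One small point worth a line in your write-up: you assert $M$ has rank $r$ (equivalently $\mathrm{rank}_\Q[q\,|\,M]=r+s$); this holds precisely because $\{1,\alpha_1,\dots,\alpha_r\}$ was chosen as a basis of the \emph{smallest} $\Q$-subspace containing $1$ and the coordinates of $v$, and it is what your rank count for $L$ and your computation of $\mathcal{D}(v)$ both rely on.
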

\begin{proof}
	
Choose a permutation matrix $P$ such that the rational entries of $Pa$ form a contiguous block starting from the first entry of $Pa$, i.e. let $k \in \set{0,\dots,n}$ such that $(Pa)_1,\dots,(Pa)_k \in \Q$ and $(Pa)_{k+1},\dots,(Pa)_n \in \R \setminus \Q$. Now we set our initial transformation $T \leftarrow P$, $\lambda \leftarrow 1$, and working vector $a' \leftarrow Pa$. In what follows, we will apply successive updates to $T$,$\lambda$ and $a'$ such that we maintain that $T$ is unimodular, $\lambda \in \Q_{> 0}$, and $a' = \lambda T a$.


First consider a vector $a' \in \R^n$ such that $a'_1,\dots,a'_k$ are rational and
$(1,a'_{k+1},\dots,a'_n)$ are linearly independent over $\Q$. If $k=0$, i.e. $(1, a'_1,\dots,a'_n)$ are linearly
independent over $\Q$, then we are done. We may therefore assume that $k \geq 1$. Similarly, if $(a'_1,\dots,a'_k) =
0^k$, then again we are done. Now let $a'_R = (a'_1,\dots,a'_k)$ and $a'_I = (a'_{k+1},\dots, a'_n)$. By our
assumptions, we note that $a'_R \neq 0$.  Via an appropriate scaling $\lambda' \in \Q_{> 0}$, we may achieve $\lambda'
a'_R \in \Z^k$ and $\mathrm{gcd}(\lambda' a'_1,\dots, \lambda' a_k) = 1$. Since $\lambda' \in \Q$, note that
$(1,a'_{k+1},\dots,a'_n)$ are linearly independent over $\Q$ iff $(1, \lambda' a'_{k+1},\dots, \lambda' a'_n)$ are. Set
$\lambda \leftarrow \lambda' \lambda$ and $a' \leftarrow \lambda' a'$. Next, applying the Euclidean algorithm on the
vector $a'_R$, we get a unimodular transformation $E$ such that
\[
E a'_R = (0^{k-1}, \mathrm{gcd}(a'_1,\dots,a'_k)) = (0^{k-1}, 1).
\]
Now define the unimodular transformation $T'$, where
\[
T'(x) = (E(x_1,\dots,x_k), x_{k+1},\dots,x_n).
\]
By construction, note that
$((Ta')_1,\dots,(Ta')_k) = E a'_R = (0^{k-1}, 1)$. Next note that $((Ta')_{k+1},\dots,(Ta')_n)$
are linearly independent over $\Q$. Letting $T \leftarrow T' T$ and $a' \leftarrow T' a'$, we have that
$a' = \lambda T a$ satisfies the required form.

Given the above case analysis, we are left with the case where $a'_R = (a'_1,\dots,a'_k) \in \Q^k$, $a'_I =
(a'_{k+1},\dots,a'_{n}) \in (\R \setminus \Q)^{n-k}$ and where $(1,a'_{k+1},\dots,a'_{n})$ have a
linear dependency over $\Q$. Now after an appropriate scaling of this dependency, we get numbers $c_0 \in \Q, c \in
\Z^{n-k} \setminus \set{0}$, $\mathrm{gcd}(c_1,\dots,c_{n-k}) = 1$, and where
\[
\pr{a_I}{c} = c_0
\]
Applying the Euclidean algorithm on $c$, we get a unimodular matrix $E$ such that
\[
E c = (\mathrm{gcd}(c_1,\dots,c_{n-k}), 0^{n-k-1}) = (1, 0^{n-k-1})
\]
Let $\hat{a} = E^{-t} a'_I$. Note that $E$ is unimodular iff $E^{-t}$ is unimodular. We get that
\[
\pr{a_I}{c} = c_0 \Rightarrow \pr{E^{-t} a_I}{E c} = c_0 \Rightarrow \hat{a}_1 = c_0
\]
Hence we see that $\hat{a}_1 = c_0 \in \Q$. Let $T'$ be the unimodular transformation defined by
\[
T'(x) = (x_1,\dots,x_k, E^{-t}(x_{k+1},\dots,x_n))
\]
Here $T'$ is the identity on the first $k$ coordinates, and acts like $E^{-t}$ on the last $n-k$ coordinates. Note that
$((T'a')_1,\dots,(T'a')_k) = (a'_1,\dots,a'_k) \in \Q^k$. Next $((T'a')_{k+1},\dots,(T'a')_n) = E^{-t} a'_I = \hat{a}$,
and $\hat{a}_1 \in \Q$. Hence $T'a'$ has at least one more rational coefficient than $a'$. By repeating the above operation suitable number of times, we obtain a vector $a' \in \R^n$ such that $a'_1,\dots,a'_k$ are rational and
$(1,a'_{k+1},\dots,a'_n)$ are linearly independent over $\Q$. By the previous analysis,
there exists unimodular transformation $T''$, $\lambda' \in \Q$ such that $\lambda'T''T'a'$ satisfies the required form. Letting $T \leftarrow T''T'T$, $\lambda \leftarrow \lambda'\lambda$, and $a' \leftarrow \lambda'T''T'a'$, we get the desired result.

For proving the second part of the result, we first claim that $\mathcal{D}(a') = \mathcal{D}(a)$. To see this, note that
\[
Aa' = 0 \Leftrightarrow A(\lambda Ta) = 0 \Leftrightarrow ATa =  0 \quad \text{ and } \quad
Aa = 0 \Leftrightarrow A\left(\frac{1}{\lambda} T^{-1} a'\right) = 0 \Leftrightarrow AT^{-1}a' =  0
\]
since $T$ is invertible and $\lambda \neq 0$. Since both $AT,AT^{-1}$ are rational, this gives that $\mathcal{D}(a') =
\mathcal{D}(a)$ as needed. Hence we need only show that $\mathcal{D}(a') = s+t$.

Take $y \in \Q^n$ such that $\pr{y}{a'} = 0$. Note that $a' = (0^t, 1^s, \alpha_1,\dots,\alpha_r)$ where
$(1,\alpha_1,\dots,\alpha_r)$ are linearly independent over $\Q$. If $s=0$, then $\sum_{i=1}^r y_{t+i}\alpha_i = 0$. Since $y \in \Q^n$, this gives a linear dependence of $(\alpha_1,\dots,\alpha_r)$ over $\Q$, and hence by assumption we must have that $y_{t+i} = 0$ for $1 \leq i \leq r$. Otherwise if $s=1$, we get $y_{t+1} + \sum_{i=1}^r y_{t+i+1}\alpha_i = 0$, which gives a linear dependence of $(1,\alpha_1,\dots,\alpha_r)$ over $\Q$. Therefore $y_{t+i} = 0$ for $1 \leq i \leq t+1$. Hence in both cases, we get that $y_{t+i} = 0$ for $1 \leq i \leq r+s$. Next note that for $y \in \Q^t \times 0^{n-t}$, we have that $\pr{y}{a'} = 0$ since $a'_1,\dots,a'_r = 0$ by assumption. By the previous observations, we obtain that
\[
L := \set{y \in \Q^n: \pr{y}{a'} = 0} = \Q^t \times 0^{n-t} = \Q^t \times 0^{s+r}.
\]
Now let $W \subseteq \R^n$ denote the linear subspace $W = \set{x \in \R^n: x_i = 0, 1 \leq i \leq t}$. Note that $a'
\in W$, and hence $\mathcal{D}(a') \leq \dim(W) = s+r$. Now take any $M = \set{x \in \R^n: Ax = 0}$, such that $a' \in
M$ and $A \in \Q^{m \times n}$. We claim that $W \subseteq M$. Let $a_1,\dots,a_m \in \Q^n$ denote the rows of $A$. Since
$a' \in M$, we have $\pr{a_i}{a'} = 0 ~ \forall ~ i \in \{1,\dots,m\}$. Hence we must have that $a_i \in L = \Q^t \times 0$. Since
$W = 0^t \times \R^{s+r}$, we have that for all $x \in W$, $\pr{a_i}{x} = 0$, and hence $W \subseteq L$. Hence
\[
\dim(L) \geq \dim(W) = s+r,
\]
from which conclude that $\mathcal{D}(a') = s+r$ as needed.
\end{proof}
We now show that points belonging to $F_v\setminus \aff_I(H_v)$ can be separated by using a finite number of CG cuts. The proof can be viewed as follows: We select $\mathcal{D}(v)+1$ vectors whose conic span is the linear subspace corresponding to the irrational components. Using each of these directions as guides, we scale the vector $v$ (corresponding to the face $F_v$) by integers and use the Kronecker theorem to compute a tiny ``correction vector" to be added to the scaled version of $v$. In this way we produce $\mathcal{D}(v)+1$ integer vectors that are very close in angle to $v$. These integer vectors have the property that the CG cuts corresponding to them separate points in $F_v\setminus \aff_I(H_v)$. In all this, Lemma \ref{lem:unimod} is crucial as it allows to simplify the choice of case analysis.

\begin{prop}\label{prop:killirr} Let $K \subseteq \R^n$ be a compact convex set and $v \in \R^n$.
Then there exists $C \subseteq \Z^n$, $|C| \leq \mathcal{D}(v)+1$, such that
\begin{alignat*}{3}
\CC(K,C) \cap H_v(K) &\subseteq \aff_I(H_v(K))\\
\CC(K,C)&\subseteq \set{x: \pr{v}{x} \leq h_K(v)}.
\end{alignat*}
\end{prop}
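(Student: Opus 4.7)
My plan is to first normalize $v$ using the paper's structural lemmas. By Lemma \ref{lem:unimod} the problem is invariant under unimodular changes of coordinates and scaling, and Lemma \ref{lem:irr-red} lets us arrange that $v = (0^t, 1^s, \alpha_1, \ldots, \alpha_r)$ with $s \in \{0,1\}$ and $\{1, \alpha_1, \ldots, \alpha_r\}$ linearly independent over $\Q$; then $\mathcal{D}(v) = s+r$. The degenerate case $s+r = 0$ (i.e.\ $v = 0$) is trivial with $C = \emptyset$, so I assume $s + r \geq 1$.

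To construct $C$, I pick $s+r+1$ vectors $u_1, \ldots, u_{s+r+1} \in \{0\}^t \times \R^{s+r}$ of small norm $\eta$ whose non-negative conic hull equals the entire subspace $\{0\}^t \times \R^{s+r}$, so that $v = \sum_j \mu_j u_j$ for some $\mu_j > 0$. Applying Kronecker's theorem (Theorem~\ref{thm:dense}) to the $\Q$-linearly independent reals $\alpha_1, \ldots, \alpha_r$, I obtain, for any prescribed $\delta > 0$, positive integers $k_j$ (which may be chosen arbitrarily large) and integer vectors $w_j \in \Z^n$ with $\|w_j - k_j v - u_j\| \leq \delta$; these $w_j$'s form my set $C$. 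Lemma \ref{lem:2step-conv} then gives the estimate $h_K(w_j) \approx k_j h_K(v) + h_{F_v(K)}(u_j)$, with error made arbitrarily small by taking $\delta$ small and $k_j$ large.

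For condition 1, I take $x \in \CC(K,C) \cap H_v$, so $\langle v, x\rangle = h_K(v)$. Using this together with $w_j - k_j v \approx u_j$, the $j$-th cut reduces to $\langle u_j, x\rangle \leq h_{F_v}(u_j) - \gamma + O(\delta)\|x\|$, with strict margin $\gamma > 0$ obtained by arranging the fractional part of $k_j h_K(v) + h_{F_v}(u_j)$ to lie in a fixed subinterval $[\eps, 1-\eps]$ (achievable via Kronecker applied to the $\alpha_i$'s, jointly with $h_K(v)$ when needed, after a small case analysis on $\Q$-linear dependencies). Because the $u_j$'s positively span $\{0\}^t \times \R^{s+r}$, these strict inequalities, combined with $\langle v, x\rangle = h_K(v)$ and the $\Q$-linear independence of $\{1, \alpha_1, \ldots, \alpha_r\}$, which rigidly constrains where integer points on $H_v$ can sit, pin $x$ into $\aff_I(H_v)$. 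For condition 2, I write $v = \sum_j \lambda_j w_j$ with $\lambda_j = \mu_j/(1 + \sum_\ell \mu_\ell k_\ell) > 0$, whence $\langle v, x\rangle \leq \sum_j \lambda_j \lfloor h_K(w_j)\rfloor$; by making $\eta$ small and tuning the $k_j$'s so that the floor-rounding gain $\sum_j \mu_j (1 - \{h_K(w_j)\})$ dominates the subadditivity gap $\sum_j \mu_j h_{F_v}(u_j) - h_K(v)$, the right-hand side is at most $h_K(v)$.

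The main obstacle is condition 2: turning the approximate inequality $\sum_j \lambda_j \lfloor h_K(w_j)\rfloor \leq h_K(v)$ into an exact one requires simultaneous Kronecker control over all fractional parts $\{k_j \alpha_i\}$ and $\{k_j h_K(v)\}$, with a case analysis on the $\Q$-linear dependencies among $\{1, \alpha_1, \ldots, \alpha_r, h_K(v)\}$, together with a careful choice of the perturbation scale $\eta$ relative to the geometry of $F_v$. The separation argument for condition 1 is more routine and largely parallels the strictly convex treatment in \cite{Dadush:de:vi:10}.
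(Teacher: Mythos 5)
Your high-level plan follows the paper's strategy (normalize via Lemmas \ref{lem:unimod} and \ref{lem:irr-red}, use Kronecker's theorem to build integer vectors nearly parallel to $v$, and use a positively spanning family of small perturbations), but the construction as written has a genuine gap. After the normalization $v=(0^t,1^s,\alpha_1,\dots,\alpha_r)$, the first $t+s$ coordinates of $w_j-k_jv$ are integers for any $k_j\in\N$, $w_j\in\Z^n$; Theorem \ref{thm:dense} only gives density of $\set{kv \pmod 1 : k\in\N}$ in $0^{t+s}\times[0,1)^r$ (this is exactly Claim 2 of the paper's proof). Consequently you cannot realize $\|w_j-k_jv-u_j\|\le\delta$ for a $u_j$ whose $s$-component is of order $\eta$ (it would have to be within $\delta$ of an integer), and if you instead shrink those components below $\delta$, the realized corrections $w_j-k_jv$ have $s$-component exactly $0$, i.e.\ they lie in $0^{t+s}\times\R^r$. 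Either way, your requirement that the perturbations positively span all of $\{0\}^t\times\R^{s+r}$ with $v=\sum_j\mu_ju_j$, $\mu_j>0$, is not what the Kronecker step can deliver when $s=1$; and it is also unnecessary: the paper spans only the $r$-dimensional irrational subspace $W=0^{s+t}\times\R^r$, because on $H_v$ the equation $\pr{v}{x}=h_K(v)$ already controls the $s$-direction, and forcing $\pr{\delta_i}{x}=0$ for a positively spanning family of $W$ pins $x$ into $W^\perp\cap H_v=\aff_I(H_v)$.

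The second, related gap is in your condition 2: the identity $v=\sum_j\lambda_jw_j$ with $\lambda_j=\mu_j/(1+\sum_\ell\mu_\ell k_\ell)$ ignores the approximation error. With $w_j=k_jv+u_j+e_j$, $\|e_j\|\le\delta$, you only get $\sum_j\lambda_jw_j=v+\sum_j\lambda_je_j$, and an aggregation whose left-hand side is not exactly a positive multiple of $v$ does not yield the valid inequality $\pr{v}{x}\le h_K(v)$ on all of $\CC(K,C)$; note $\CC(K,C)$ is a polyhedron that need not be bounded, so your $O(\delta)\|x\|$ error terms (which also appear in your condition 1 argument) are not uniformly controllable, and no tuning of the $k_j$ repairs this. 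The paper's device is to make the realized perturbations cancel \emph{exactly}: it sets $a_i=n_iv-\delta_i$ with the $\delta_i$ forced into small balls around $aw_i$ (radius $b\ll a\eps$) inside $W$, so that $0$ lies in the relative interior of $\conv\set{\delta_1,\dots,\delta_{r+1}}$ and there exist $\lambda_i>0$ with $\sum_i\lambda_i\delta_i=0$ and $\sum_i\lambda_in_i=1$; then $\sum_i\lambda_ia_i=v$ exactly, $\sum_i\lambda_i\floor{h_K(a_i)}\le h_K(v)$ gives condition 2 with no residual error, on $H_v$ each cut gives $\pr{\delta_i}{x}\ge0$ and the exact cancellation with strictly positive weights gives condition 1, and in the case $\aff_I(H_v)=\emptyset$ a strict version (arranged by the fractional-part trick you sketch, as in the paper's Case 2) gives $\CC(K,C)\cap H_v=\emptyset$. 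Your proof needs to be restructured along these lines; choosing $\delta$ small does not suffice.
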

\begin{proof}
By scaling $v$ by a positive scalar if necessary, we may assume that $h_K(v) \in \set{0,1,-1}$. Let $T$ and $\lambda$
denote the transformation and scaling promised for $v$ in Lemma \ref{lem:irr-red}. Note that
\[
T^{-t} \set{x \in \R^n: \pr{v}{x} = h_K(v)} = \set{x \in \R^n: \pr{v}{T^tx} = h_K(v)} = \set{x \in \R^n: \pr{\lambda
Tv}{x} = h_{T^{-t}K}(\lambda Tv)}.
\]
Now let $v' = \lambda Tv$ and $b' = h_{T^{-t}K}(\lambda Tv)$. By Lemma \ref{lem:unimod}, it suffices  to prove
 the statement for $v'$ and $K' = T^{-t} K$. Now $v'$ has the
  form \eqref{eq:irr-red} where $t,r \in \Z_+$, $s \in \set{0,1}$, and $(1,\alpha_1,\dots,\alpha_r)$ are linearly independent over $\Q$. For
convenience, let $k = s+t$, where we note that $v'_{k+1},\dots,v'_{k+r} = (\alpha_1,\dots,\alpha_r)$.

\paragraph{Claim 1:} Let $S = \set{x \in \Z^n: \pr{v'}{x} = b'}$. Then $S$ satisfies one of the following
\begin{enumerate}
\item $S = \Z^t \times b' \times 0^r$: $s=1, b' \in \Z$.
\item $S = \Z^t \times 0^r$: $s=0, b' = 0$.
\item $S = \emptyset$: $s=0, b' \neq 0$ or $s=1, b' \notin \Z$.
\end{enumerate}
Note that $b' = h_{T^{-t}K}(\lambda Tv) = \lambda h_K(v) \in \set{0, \pm  \lambda} \subseteq \Q$. We first see that
\[
(s=1):~ b'= \pr{v'}{x} = x_k + \sum_{i=1}^r x_{k+i}\alpha_i, \quad (s=0):~ b'=\pr{v'}{x} = \sum_{i=1}^r x_{k+i}\alpha_i.
\]
Now if $x \in S$, then
\[
(s=1):~ (x_k-b') + \sum_{i=1}^r x_{k+i}\alpha_i = 0, \quad (s=0): (-b') + \sum_{i=1}^r x_{k+i}\alpha_i = 0.
\]
Since $b' \in \Q$, and $x \in \Z^n$, in both cases the above equations give us a linear dependence of
$(1,\alpha_1,\dots,\alpha_r)$ over $\Q$. Since by assumption $(1,\alpha_1,\dots,\alpha_r)$ are linearly independent over
$\Q$, we have that
\[
(s=0,1):~ x_{k+i} = 0, 1 \leq i \leq r \quad (s=1):~ x_k = b' \quad (s=0):~ b' = 0.
\]
If $s=1$, then we must have that $b' \in \Z$, since $x_k = b'$ and $x \in \Z^n$. From this we immediately recover case
$(1)$. If $s=0$, then the conditions $b' = 0$ and $x_{k+i} = 0$, $1 \leq i \leq r$, verify case $(2)$. If we
are neither in case $(1)$ or $(2)$, then by the above analysis $S$ must be empty, and so we are done.
\paragraph{Claim 2:} Let $I = \set{n v' \pmod 1: n \in N}$. Then Theorem \ref{thm:dense} implies that $I$ is dense in $0^k \times [0,1)^r$.\linebreak
We first note that $v'_1,\dots,v'_k \in \Z$ and hence $v'_1,\dots,v'_k \equiv 0 \pmod 1$. Next note that
$(1,\alpha_1,\dots,\alpha_r)$ are linearly independent over $\Q$, and hence by Theorem \ref{thm:dense}
we have that $\set{n (\alpha_1,\dots,\alpha_r): n \in N}$ is dense over $[0,1)^r$. Putting the last two
statements together immediately yields the claim.

\paragraph{Claim 3:} There exists $a_1,\dots,a_{r+1} \subseteq \Z^n$ and  $\lambda_1,\dots,\lambda_{r+1} \geq 0$ such that $\sum_{i=1}^{r+1} \lambda_i a_i = v'$ and  $\sum_{i=1}^{r+1} \lambda_i\floor{h_K'(a_i)} \leq b'$.

\quad\linebreak
\indent Since $K'$ is compact, there exists $R > 0$ such that $K' \subseteq RB^n$. Take the subspace $W = 0^k \times \R^r$.  Let
$w_1,\dots,w_{r+1} \in W \cap S^{n-1}$, be any vectors such that for some $0 < \eps < 1$ we have $\sup_{1 \leq i \leq r+1} \pr{w_i}{d} \geq \eps$  for all $d \in
S^{n-1} \cap W$ (e.g. $w_1,\dots,w_{r+1}$ are the vertices of a scaled isotropic $r$-dimensional simplex).
Let $a = \frac{1}{8} \min \set{\frac{1}{R}, \eps}$, and $b = \frac{1}{2}\eps a$. Now, for $1 \leq i \leq r+1$ define $E_i = \set{x: x \in a w_i + b (B^n \cap W) \pmod 1}$.
Since $W = 0^k \times \R^r$, note that $E_i \subseteq 0^k \times [0,1)^r$. By Claim 2 the set $I$ is dense in $0^k
\times [0,1)^r$. Furthermore each set $E_i$ has non-empty interior with respect to the subspace topology on $0^k \times
[0,1)^r$. Hence for all $i$, $1 \leq i \leq r + 1$, we can find $n_i \in \N$ such that $n_iv' \pmod 1 \in E_i$.

Now $n_iv' \pmod 1 \in E_i$, implies that for some $\delta'_i \in E_i$, $n_iv' - \delta'_i \in \Z^n$. Furthermore
$\delta'_i \in E_i$ implies that there exists $\delta_i \in a w_i + b(B^n \cap W)$ such that $\delta_i' - \delta_i \in
\Z^n$. Hence $(n_iv' - \delta'_i) + (\delta'_i - \delta_i) = n_iv' - \delta_i \in \Z^n$. Let $a_i = n_iv' - \delta_i$.
Note that $ \|a_i - n_iv'\| = \|-\delta_i\| \leq a + b \leq 2a \leq 1/(4R)$.  We claim that $\floor{h_{K'}(a_i)} \leq h_{K'}(n_iv')$. First note that $h_{K'}(n_iv') = n_ib'$. Since we assume
that $S \neq \emptyset$, we must have that $b' \in \Z$ and hence $n_ib' \in \Z$. Now note that
\begin{align*}
h_{K'}(a_i) &= h_{K'}((a_i-n_iv')+n_iv')
            \leq h_{K'}(n_iv') + h_{K'}(a_i-n_iv')
            = n_ib' + h_{K'}(-\delta_i) \\
            &\leq n_ib' + h_{RB^n}(-\delta_i)
            \leq n_ib' + R\|\delta_i\|
            \leq n_ib' + R\left(\frac{1}{4R}\right) = n_ib' + \frac{1}{4}.
\end{align*}
Therefore we have that $\floor{h_{K'}(a_i)} \leq \floor{n_ib' + \frac{1}{4}} = n_ib' = h_{K'}(n_iv')$,
since $n_ib' \in \Z$.

We claim that $\frac{a\eps}{4}B^n \cap W \subseteq \conv \set{\delta_1,\dots,\delta_{r+1}}$. First note that by
construction, $\conv\set{\delta_1,\dots,\delta_{r+1}} \subseteq W$. Hence if the conclusion is false, then by the separator
theorem there exists $d \in W \cap S^{n-1}$ such that $h_{\frac{a\eps}{4}B^n \cap W}(d) = \frac{a\eps}{4} > \sup_{1 \leq i \leq r+1} \pr{d}{\delta_i}$.
For each $i$, $1 \leq i \leq r+1$, we write $\delta_i = a w_i + bz_i$ where $\|z_i\| \leq 1$. Now note that
\begin{align*}
\sup_{1 \leq i \leq r+1} \pr{d}{\delta_i} &= \sup_{1 \leq i \leq r+1} \pr{d}{a w_i + bz_i}
                                           = \sup_{1 \leq i \leq r+1} a \pr{d}{w_i} + b\pr{d}{z_i} \\
                                          &\geq \sup_{1 \leq i \leq r+1} a\pr{d}{w_i} - b\|d\|\|z_i\|
                                          \geq a \eps - b = \frac{a\eps}{2} > \frac{a\eps}{4},
\end{align*}
a contradiction. Hence there exists $\lambda_1,\dots,\lambda_{r+1} \geq 0$ and $\sum_{i=1}^{r+1} \lambda_in_i = 1$ such that $\sum_{i=1}^{r+1} \lambda_i \delta_i = 0$.

Now we see that
\begin{equation}
\sum_{i=1}^{r+1} \lambda_ia_i = \sum_{i=1}^{r+1} \lambda_in_iv' + \sum_{i=1}^{r+1} \lambda_i(a_i-n_iv')
			      = \left(\sum_{i=1}^{r+1} \lambda_in_i\right) v' - \sum_{i=1}^{r+1} \lambda_i \delta_i
                              = \left(\sum_{i=1}^{r+1} \lambda_in_i\right) v'.
\end{equation}
Next note that
\begin{equation}
\sum_{i=1}^{r+1} \lambda_i\floor{h_{K'}(a_i)} \leq \sum_{i=1}^{r+1} \lambda_ih_{K'}(n_iv') =
h_{K'}\left(\left(\sum_{i=1}^{r+1} \lambda_in_i\right) v'\right).
\end{equation}
\paragraph{Case 2: $S = \emptyset$.} \hspace{1em}
\noindent
The proof here shall proceed very similarly to the one above, with the exception that we need to do some extra work to
guarantee a strict inequality.

If $s=0$, then since $S = \emptyset$ we must have that $b' \neq 0$. Let $v^z = \frac{1}{|b'|}v'$ and $b^z =
\mathrm{sign}(b')$, and $v^f = \frac{1}{2|b'|}v'$ and $b^f = \frac{1}{2}\mathrm{sign}(b')$. Note that $h_{K'}(v^z) = b^z
\in \set{\pm1}$ and $h_{K'}(v^f) = b^f \in \set{\pm 1/2}$. Furthermore, since $b' \in \Q$, we see that
\[
(1,v^z_{k+1},\dots,v^z_{k+r}) = (1,\frac{1}{2|b'|}\alpha_1,\dots,\frac{1}{2|b'|}\alpha_r)
\]
are still linearly independent over $\Q$, and that $v^z_1,\dots,v^z_k = v'_1,\dots,v'_k= 0 \in \Z$.

Next if $s = 1$, then $b' \in \Q \setminus \Z$. Let $c_1 \in \Z$ denote the least positive integer such that $c_1b' \in \Z$ and let $c_2 \in \Z$ denote the least positive integer such that $\frac{1}{3} \leq c_2b' \pmod 1 \leq
\frac{2}{3}$ (always exists since $b' \neq 0$). Let $v^z = c_1v'$ and $b^z = c_1b'$, and let $v^f= c_2v'$ and $b^f = c_2b'$.  Again we
have that $h_{K'}(v^z) = b^z \in \Z$, and $h_{K'}(v^f) = b^f$ (since $c_1,c_2 \geq 0$). Lastly, since $c_1,c_2 \in \Z$,
we note that $v^z_1,\dots,v^z_{k-1} = v^f_1,\dots, v^f_{k-1} = 0 \in \Z$, $v^z_k = c_1, v^f_k = c_2 \in \Z$, and
$(1,v^z_{k+1},\dots,v^z_{k+r}) = (1,c_1\alpha_1,\dots,c_1\alpha_r)$ are still linearly independent over $\Q$.

Now let $I' = \set{nv^z \pmod 1: n \in \N}$. Using the proof of Claim $2$, we see that $I'$ is dense in $0^k \times
[0,1)^r$.  Furthermore since $v^f \mod 1 \in 0^k \times [0,1)^r$, we have that $I' + v^f \pmod 1$ is also dense in $0^k
\times [0,1)^r$. Note that $I' + v^f \pmod 1 = \set{(nc_1 + c_2)v' \pmod 1: n \in \N}$.

Let $w_1,\dots,w_{l+1}$, $E_1,\dots,E_{l+1}$ be defined identically as in Case 1. Via the same density argument as in
case $1$, we may pick $n_i \in \N$, such that $(n_ic_1 + c_2)v' \in E_i$. Again we define $a_1,\dots,a_{r+1}$ in exactly the same way as in Case $1$. To conclude the proof of the claim, we need only show that $\floor{h_{K'}(a_i)} \leq \floor{n_ib' + \frac{1}{4}} = n_ib' = h_{K'}(n_iv')$ holds with a strict inequality
 in this case. The exact same argument gives us now that
\begin{equation}
h_{K'}(a_i) \leq (n_ic_1+c_2)b' + \frac{1}{4}.
\end{equation}
Now $n_ic_1b' = n_ib^z \in \Z$  and $\frac{1}{3} \leq c_2b' \pmod 1 \leq \frac{2}{3}$. Therefore
\begin{equation}
\floor{h_{K'}(a_i)} < (n_ic_1+c_2)b',
\end{equation}
as needed.
\paragraph{Claim 4:} Let $C = \{a_i\}_{i=1}^{r+1}$ for the $a_i$'s from Claim 3. Then
$ \CC(K,C) \cap \set{x: \pr{v'}{x} = b'} \subseteq \aff(S)$.
If $S = \emptyset$, note that by the Claim $3$, we have that
\[
\sup \set{\pr{v'}{x}: x \in \R^n, \pr{a_i}{x} \leq \floor{h_{K'}(a_i)}, 1 \leq i \leq r+1} < b',
\]
and hence $\CC(K,C) \cap \set{x: \pr{v'}{x} = b'} = \emptyset$ as needed.

If $S \neq \emptyset$, examine the set

\quad\linebreak
\indent Examine the set $P = \set{x: \pr{v'}{x} = b', \pr{a_i}{x} \leq \floor{h_{K'}(a_i)}, 1 \leq i \leq l+1}$.
From the proof of Claim $3$, we know that for each $i$, $1 \leq i \leq r+1$, we have  $\floor{h_{K'}(a_i)} \leq h_{K'}(n_iv') = n_ib'$
and hence $\pr{n_iv' - a_i}{x} = \pr{\delta_i}{x} \geq 0$,
is a valid inequality for $P$. Now, from the proof of Claim $3$, we have
\begin{equation}
\frac{a\eps}{4}B^n \cap W \subseteq \conv \set{\delta_1,\dots,\delta_{r+1}}.
\label{eq:lift1-bc}
\end{equation}
We claim that for all $H \subseteq \set{1,\dots,r+1}$, $|H| = r$, the set $\set{\delta_i: i \in H}$ is linearly
independent. Assume not, then WLOG we may assume that $\delta_1,\dots,\delta_r$ are not linearly independent.
Hence there exists $d \in S^{n-1} \cap W$, such that $\pr{d}{\delta_i} = 0$ for all $1 \leq i \leq n$. Now
by possibly switching $d$ to $-d$, we may assume that $\pr{d}{\delta_{r+1}} \leq 0$. Hence we get that $\sup_{1 \leq i \leq r+1} \pr{d}{\delta_i} \leq 0$
in contradiction to (\ref{eq:lift1-bc}).

Now let $\lambda_1,\dots,\lambda_{r+1} \geq 0$, $\sum_{i=1}^{r+1}
\lambda_in_i = 1$ be a combination such that $\sum_{i=1}^{r+1} \lambda_i \delta_i = 0$. Note that
$\lambda_1,\dots,\lambda_{r+1}$ forms a linear dependency on $\delta_1,\dots,\delta_{r+1}$, and hence by
the previous claim we must have that $\lambda_i > 0$ for all $1 \leq i \leq r+1$.

We claim for $P\subseteq W^\perp$. To see this, note that $ 0 = \pr{x}{0} = \pr{x}{\sum_{i=1}^{r+1} \lambda_i \delta_i} = \sum_{i=1}^{r+1} \lambda_i \pr{x}{\delta_i}$ for every $x\in P$.
Now since $\mathrm{span}(\delta_1,\dots,\delta_{r+1}) = W$, we see that $\pr{x}{\delta_i} = 0$ for all $1 \leq i \leq r+1$ iff $x \in W^\perp$. Hence if $x \notin W^\perp$, then by the above equation and the fact that $\lambda_i > 0$ for all $i \in \set{1,\dots,r + 1}$, there exists $i,j \in \set{1,\dots,r + 1}$ such that $\pr{x}{\delta_i} > 0$ and $\pr{x}{\delta_j} < 0$. But then $x \notin P$, since $\pr{x}{\delta_j} < 0$, a contradiction.
Now $W = 0^k \times \R^r$, hence $W^\perp = \R^k \times 0^r$. To complete the proof we see  that $P  \subseteq \set{x: x \in \R^k \times 0^r, \pr{v'}{x} = b'} = \aff(S)$.
\end{proof}
\subsection{Combining results of Section \ref{sec:lift} and \ref{sec:sepirrational} to show $\CC(F_v,S) = H_v \cap \CC(K,S')$}\label{sec:facehomogenity}

\begin{prop}\label{PPPP}\label{thm:cg-lift} Let $K \subseteq \R^n$ be a compact convex set. Take $v \in \R^n$. Assume that $\CC(F_v(K))$ is finitely
generated. Then $\exists~ S \subseteq \Z^n$, $|S| < \infty$, such that $\CC(K,S)$ is a polytope and
\begin{alignat}{3}
\label{331} \CC(K,S) \cap H_v(K) &= \CC(F_v(K))\\	
\label{332} \CC(K,S) &\subseteq \set{x: \pr{v}{x} \leq h_K(v)}.	
\end{alignat}
\
\end{prop}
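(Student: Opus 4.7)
The plan is to assemble $S$ by combining the lifts from Proposition~\ref{lem:lift2} with the cuts from Proposition~\ref{prop:killirr}, plus a few box cuts to enforce boundedness. Since $\CC(F_v(K))$ is assumed finitely generated, fix a finite $W \subseteq \Z^n$ with $\CC(F_v(K)) = \CC(F_v(K),W)$. For each $w \in W$, Proposition~\ref{lem:lift2} produces a lifted vector $\tilde w \in \Z^n$ whose CG cut for $K$ restricted to $\aff_I(H_v(K))$ implies the CG cut for $F_v(K)$ induced by $w$ on the same affine set. Let $W'$ be the (finite) collection of all such lifts, and let $C \subseteq \Z^n$ be the finite set from Proposition~\ref{prop:killirr}. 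To ensure $\CC(K,S)$ is bounded (hence a polytope, not merely a polyhedron), add a finite collection $B$ of CG cuts coming from $\pm e_i$, which are valid since $K$ is compact. Set $S = W' \cup C \cup B$.

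The identity \eqref{331} is then verified by proving each inclusion separately. For $\supseteq$, any $x \in \CC(F_v(K))$ satisfies $x \in F_v(K) \subseteq H_v(K)$, and for every $a \in \Z^n$ we have $\floor{h_{F_v(K)}(a)} \leq \floor{h_K(a)}$ since $F_v(K) \subseteq K$, so $x$ satisfies every CG cut of $K$, in particular those indexed by $S$. For $\subseteq$, take $x \in \CC(K,S) \cap H_v(K)$: the cuts from $C$ force $x \in \aff_I(H_v(K))$ by Proposition~\ref{prop:killirr}, and then for each $w \in W$ the corresponding lift $\tilde w \in W' \subseteq S$, combined with $x \in \aff_I(H_v(K))$, yields $\pr{w}{x} \leq \floor{h_{F_v(K)}(w)}$ via Proposition~\ref{lem:lift2}. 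Thus $x$ satisfies every defining cut of $\CC(F_v(K),W) = \CC(F_v(K))$. Condition \eqref{332} is immediate from $C \subseteq S$ together with the second conclusion of Proposition~\ref{prop:killirr}.

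I do not anticipate significant obstacles: the genuinely hard content has already been packaged into Propositions~\ref{lem:lift2} and~\ref{prop:killirr}, and what remains is essentially a bookkeeping argument that glues the lifting statement (which handles the rational part of the face via $\aff_I(H_v(K))$) to the irrational-separation statement (which certifies that a finite set of cuts already collapses $\CC(K,S) \cap H_v(K)$ into $\aff_I(H_v(K))$). The mildest point of care is ensuring that the chosen $S$ is genuinely finite and that $\CC(K,S)$ is bounded, both of which are handled by the explicit construction of $W'$, $C$, and $B$ above.
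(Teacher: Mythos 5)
Your proposal is correct and mirrors the paper's proof: your sets $C$, $W'$, and $B$ are exactly the paper's $S_1$ (from Proposition~\ref{prop:killirr}), $S_2$ (the lifts from Proposition~\ref{lem:lift2} applied to a finite generating set of $\CC(F_v(K))$), and $S_3 = \set{\pm e_i}$, and both inclusions of \eqref{331} together with \eqref{332} are argued the same way. No gaps.
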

\begin{proof}
	The right to left containment in \eqref{331} is direct from  $\CC(F_v(K))\subseteq \CC(K,S)$ as every CG  cut for $K$ is a CG cut for $F_v(K)$. For the reverse containment and for \eqref{332} we proceed as follows.

 Using Proposition \ref{prop:killirr} there exists $S_1 \subseteq \Z^n$ such that $\CC(K,S_1) \cap H_v(K) \subseteq \aff_I(H_v(K))$
and $ \CC(K,S_1) \subseteq \set{x: \pr{v}{x} \leq h_K(v)}$.  Next let $G \subseteq \Z^n$ be such that $\CC(F_v(K), G) = \CC(F_v(K))$. For
each $w \in G$, by Proposition \ref{lem:lift2} there exists $w' \in \Z^n$ such that
\begin{equation*}
\CC(K, w') \cap \aff_I(H_v(K)) \subseteq \CC(F_v(K), w) \cap \aff_I(H_v(K)).
\end{equation*}
For each $w \in G$, add $w'$ above to $S_2$. Now note that
\begin{align*}
\CC(K,S_1 \cup S_2) \cap H_v(K)  &= \CC(K,S_1) \cap \CC(K,S_2) \cap H_v(K) \\
                                           &\subseteq \CC(K,S_2) \cap \aff_I(H_v(K)) = \CC(F_v(K), G) \cap \aff(A) \subset \CC(F_v(K)).
\end{align*}
Now let $S_3 = \set{\pm e_i: 1 \leq i \leq n}$. Note that since $K$ is compact $\CC(K,S_3)$ is a cuboid with bounded
side lengths, and hence is a polytope. Letting $S = S_1 \cup S_2 \cup S_3$, yields the desired result.
\end{proof}

We also obtain a generalization of the classical result known for rational polyhedra.

\begin{cor} If $F$ is an exposed face of $K$ then $\CC(F)=\CC(K)\cap F$.
\end{cor}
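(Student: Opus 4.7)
The plan is to establish the two set inclusions separately. Write $F = F_v(K)$ for some $v\in \R^n$ exposing $F$, and observe the preliminary identity $\CC(K)\cap F = \CC(K)\cap H_v(K)$: the inclusion $\subseteq$ is immediate from $F\subseteq H_v$, while $\supseteq$ follows because $\CC(K)\subseteq K$ forces $\CC(K)\cap H_v\subseteq K\cap H_v = F$.

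For the easy inclusion $\CC(F)\subseteq \CC(K)\cap F$: the containment $\CC(F)\subseteq F$ is immediate from the definition, so it remains to show $\CC(F)\subseteq \CC(K)$. This follows from monotonicity of the support function: for every $a\in\Z^n$, $F\subseteq K$ gives $h_F(a)\le h_K(a)$ and hence $\floor{h_F(a)}\le \floor{h_K(a)}$, so any point satisfying every CG cut of $F$ automatically satisfies every CG cut of $K$.

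For the harder inclusion $\CC(K)\cap F\subseteq \CC(F)$: Since $F$ is a closed subset of the compact set $K$, it is itself a non-empty compact convex set, so Theorem \ref{paperresult} (applied to $F$) guarantees that $\CC(F)$ is finitely generated. This is exactly the hypothesis of Proposition \ref{PPPP}, which therefore produces a finite $S\subseteq\Z^n$ with $\CC(K,S)\cap H_v(K) = \CC(F)$. Combining with the preliminary identity and the trivial containment $\CC(K)\subseteq \CC(K,S)$ yields
\[
\CC(K)\cap F \;=\; \CC(K)\cap H_v(K) \;\subseteq\; \CC(K,S)\cap H_v(K) \;=\; \CC(F),
\]
as needed.

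The only delicate point is the invocation of Theorem \ref{paperresult} on $F$ itself: this is legitimate because the corollary is a consequence of the main theorem rather than a step in its proof. Alternatively, if one prefers to avoid the appearance of circularity, the same argument goes through by induction on $\dim(K)$, since a proper exposed face satisfies $\dim(F)<\dim(K)$ (and the case $F=K$ is trivial as $\CC(K)\cap K = \CC(K) = \CC(F)$).
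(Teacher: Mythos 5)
Your argument is correct and is essentially the proof the paper intends (the corollary is stated without proof there): apply Theorem \ref{paperresult} to the nonempty compact convex face $F$ to get that $\CC(F)$ is finitely generated, then use Proposition \ref{PPPP} to obtain $\CC(K,S)\cap H_v(K)=\CC(F)$ and sandwich $\CC(K)\cap F=\CC(K)\cap H_v(K)\subseteq \CC(K,S)\cap H_v(K)$, with the reverse inclusion from monotonicity of the CG cuts. Your remark on avoiding circularity via induction on dimension matches the structure of the paper's final section, so there is no issue there.
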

\section{Approximation of the CG closure}\label{sec:approx}
\subsection{Approximation 1 of the CG closure}\label{sec:approx1}

In this section, we construct our first approximation of the CG closure. Under the assumption that the CG closure of every proper exposed face of $K$ is defined by a finite number of CG cuts and by the use of Proposition~\ref{PPPP} and a compactness argument we construct a first approximation of the CG closure that uses a finite number of CG cuts. The main properties of this approximation are that it is a polytope and it is contained in $K \cap \textup{aff}_I(K)$. For this we will need the following  lemma that describes integer affine subspaces.

\begin{lem} Take $A \in \R^{m \times n}$ and $b \in \R^m$. Then there exists $\lambda \in \R^m$ such that for $a' =
\lambda A$, $b' = \lambda b$, we have that $\set{x \in \Z^n: Ax = b} = \set{x \in \Z^n: a'x = b'}$.
\label{lem:eqsys-red}
\end{lem}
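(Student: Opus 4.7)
The plan is to choose $\lambda$ by a simple measure-theoretic argument. I will pick $\lambda$ ``generically'' in $\R^m$ so that the single equation $(\lambda A)x = \lambda b$ automatically separates every integer point violating $Ax = b$ from every integer point satisfying it.

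Concretely, I would first let $V := \set{Ax - b : x \in \Z^n,\ Ax \neq b}$. Then $V \subseteq \R^m \setminus \{0\}$ by construction, and $V$ is countable since $\Z^n$ is. For each $v \in V$, the hyperplane $H_v := \set{\lambda \in \R^m : \pr{\lambda}{v} = 0}$ has Lebesgue measure zero, so the countable union $\bigcup_{v \in V} H_v$ still has measure zero. I would then pick any $\lambda \in \R^m \setminus \bigcup_{v \in V} H_v$ (in fact almost every $\lambda$ works) and set $a' := \lambda A$ and $b' := \lambda b$.

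To verify equality of the two integer solution sets, the key identity is that for every $x \in \Z^n$ one has $a' x - b' = \lambda(Ax - b)$. Hence $Ax = b$ immediately gives $a' x = b'$. Conversely, if $x \in \Z^n$ satisfies $a' x = b'$ but $Ax \neq b$, then $Ax - b \in V$ and $\lambda \in H_{Ax-b}$, contradicting the choice of $\lambda$. So the two solution sets coincide.

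I do not expect any serious obstacle here: the only conceptual point is that we must allow $\lambda$, and therefore $a'$ and $b'$, to have irrational entries, which is exactly what permits a single real linear equation to pin down an entire prescribed lattice of integer solutions (the classical model being that $x_1 + \sqrt{2}\,x_2 = 0$ cuts out only $(0,0)$ from $\Z^2$). The argument is essentially a one-liner once one notices that only countably many ``bad'' hyperplanes in $\R^m$ must be avoided.
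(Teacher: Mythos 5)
Your proof is correct, and it follows the same underlying ``generic choice avoiding countably many measure-zero sets'' idea as the paper, but with a different and in fact leaner parametrization. The paper first disposes of the case $\{x \in \R^n : Ax=b\}=\emptyset$ via Farkas' Lemma, then reduces to the case where the rows of $[A\,|\,b]$ are linearly independent so that a linear functional $r$ on the row space $T=\mathrm{span}(a_1,\dots,a_m)$ sending $w=\lambda A$ to $\lambda b$ is well defined; it then picks $a'\in T$ outside the countable union of the proper subspaces $T_z=\{w\in T: \langle w,z\rangle = r(w)\}$ indexed by the bad integer points $z$, and sets $b'=r(a')$. You instead work directly in the space of multipliers: the bad sets are the hyperplanes $\{\lambda\in\R^m : \langle\lambda, Ax-b\rangle=0\}$ indexed by the nonzero residuals $Ax-b$ of bad integer points, and the identity $a'x-b'=\lambda(Ax-b)$ does all the verification. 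Because you choose $\lambda$ itself rather than a point of the row space, you never need the Farkas case split or the rank reduction that makes $r$ well defined (the infeasible case and the degenerate case $V=\emptyset$ are absorbed automatically), so your route buys a shorter argument at no loss of generality; the paper's route makes the choice inside $T$, which is slightly more structured but requires those preliminary reductions.
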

\begin{proof}
If $\set{x \in \R^n: Ax = b} = \emptyset$, then by Farka's Lemma there exists $\lambda \in \R^m$ such that $\lambda A =0$ and $\lambda b = 1$. Hence $\set{x \in \R^n: Ax = b} = \set{x \in \R^n: 0x = 1} = \emptyset$ as needed. We may
therefore assume that $\set{x \in \R^n: Ax = b} \neq \emptyset$. Therefore we may also assume that the rows of the augmented matrix $[A\,|\,b]$ are linearly independent.

Let $T = \mathrm{span}(a_1,\dots,a_m)$, where $a_1,\dots,a_m$ are the rows of $A$. Define $r: T \rightarrow \R$ where
for $w \in T$ we let $r(w) = \lambda b$  for $\lambda \in \R^m$  where $\lambda A = w$.
Since the rows of $A$ are linearly independent we obtain that $r$ is well defined and is a linear operator.
Let $S = \set{x \in \Z^n: Ax = b}$. For $z \in \Z^n$, examine $T_z = \set{w \in T: \langle w, z \rangle = r(w)}$. By linearity of $r$, we
see that $T_z$ is a linear subspace of $T$. Note that for $z \in \Z^n$, $T_z = T$ iff $z \in S$. Therefore $\forall ~ z \in \Z^n \setminus S$, we must have that $T_z \neq T$, and hence $\dim(T_z) \leq \dim(T) - 1$. Let $m_T$ denote the Lebesgue measure on $T$. Since $\dim(T_z) < \dim(T)$, we see that $m_T(T_z) = 0$. Let $T' = \bigcup_{z \in \Z^n \setminus
S} T_z$. Since $\Z^n \setminus S$ is countable, by the countable subadditivity of $m_T$ we have that $m_T\left(T' \right) \leq \sum_{z \in \Z^n \setminus S} m_T(T_z) = 0$.
Since $m_T(T) = \infty$, we must have that $T \setminus T' \neq \emptyset$.  Hence we may pick $a' \in T \setminus T'$.
Letting $b' = r(a')$, we note that by construction there $\exists ~ \lambda \in \R^m$ such that $\lambda A = a'$ and
$\lambda b = b'$. Hence for all $z \in S$, $\lambda A z = \lambda b \Rightarrow a' x = b'$.  Now take $z \in \Z^n
\setminus S$. Now since $a' \in T \setminus T'$, we have that $a' \notin T_z$. Hence $a' z \neq b'$. Therefore we see
that $\set{x \in \Z^n: a' x = b'} = \set{x \in \Z^n: A x = b}$
as needed.
\end{proof}
\begin{prop} Let $\emptyset\neq K \subseteq \R^n$ be a  compact convex set. If  $\CC(F_v(K))$ is finitely generated for any proper exposed face $F_v(K)$ then $\exists~ S \subseteq \Z^n$, $|S| < \infty$, such that $\CC(K,S) \subseteq K \cap \aff_I(K)$ and $\CC(K,S)$ is a polytope.
\label{lem:move-inside}
\end{prop}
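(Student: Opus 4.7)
The strategy is to combine Proposition~\ref{thm:cg-lift} with a compactness argument on $S^{n-1}$, and then use Lemma~\ref{lem:eqsys-red} together with Proposition~\ref{prop:killirr} to handle irrationality of $\aff(K)$. For each $v\in S^{n-1}$ with $F_v(K)\neq K$, Proposition~\ref{thm:cg-lift} yields a finite $S_v\subseteq\Z^n$ so that $P_v:=\CC(K,S_v)$ is a polytope satisfying $P_v\cap H_v(K)=\CC(F_v(K))$ and $P_v\subseteq\{x:\pr{v}{x}\leq h_K(v)\}$. Because $\CC(F_v(K))$ is finitely generated by hypothesis, it is a rational polytope whose affine hull is a rational subspace of $\aff(F_v(K))$, forcing $\CC(F_v(K))\subseteq\aff_I(F_v(K))\subseteq\aff_I(K)$ by monotonicity of $\aff_I$ under inclusion of affine spaces. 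Consequently $P_v\cap H_v(K)\subseteq K\cap\aff_I(K)$. I would also throw in the coordinate cuts $S_0:=\{\pm e_i:1\leq i\leq n\}$ so that $\CC(K,S_0)$ is a bounded box containing $K$, guaranteeing boundedness of every polytope obtained below.

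The heart of the argument is a compactness step producing a finite $V\subseteq S^{n-1}$ with $\CC(K,S_0)\cap\bigcap_{v\in V}P_v\subseteq K$. Any $y\in\R^n\setminus K$ is strictly separated from $K$ by a supporting hyperplane, yielding $v\in S^{n-1}$ with $F_v(K)\neq K$ and $\pr{v}{y}>h_K(v)$, hence $y\notin P_v$. The open sets $U_v:=\R^n\setminus P_v$ therefore cover $\R^n\setminus K$. Since $\CC(K,S_0)\setminus K$ is bounded but not closed, I would pass to its closure $D:=\CC(K,S_0)\setminus\mathrm{int}(K)$, which is compact; points of $\partial K\cap D$ already lie in $K$ and need no cutting, so the augmented open cover admits a finite subcover $U_{v_1},\dots,U_{v_N}$. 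The resulting polytope $\CC(K,S_0\cup\bigcup_i S_{v_i})\subseteq K$, and moreover its boundary points lying on some $H_{v_i}$ belong to $K\cap\aff_I(K)$ by the first paragraph.

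If $\aff(K)=\aff_I(K)$, the inclusion $\CC(K,S)\subseteq K=K\cap\aff_I(K)$ is immediate. If instead $\aff(K)\supsetneq\aff_I(K)$, the relative interior of $K$ may contain points outside $\aff_I(K)$ not reached by the boundary-based argument. Here I would apply Lemma~\ref{lem:eqsys-red} to the system defining $\aff(K)$ to obtain a single equation $\pr{a'}{x}=b'$ with the same integer solutions, and then invoke Proposition~\ref{prop:killirr} in directions $v$ that are constant on $\aff_I(K)$ but vary over $\aff(K)\setminus\aff_I(K)$; each such direction contributes at most $\mathcal{D}(v)+1$ integer cuts whose intersection with $H_v$ collapses into $\aff_I(H_v)\subseteq\aff_I(K)$. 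A second compactness step on the finitely many relevant direction classes then furnishes a finite augmentation of $S$ forcing $\CC(K,S)\subseteq\aff_I(K)$. In all cases the final $S$ is finite and $\CC(K,S)$, being a finite intersection of polytopes, is itself a polytope.

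The main obstacle is twofold. First, since $\CC(K,S_0)\setminus K$ is bounded but not closed, the compactness argument requires the delicate passage to the compact closure $D=\CC(K,S_0)\setminus\mathrm{int}(K)$ together with separate treatment of $\partial K$, ensuring that the $U_v$'s suffice on the genuinely-outside-$K$ portion. Second, when $\aff(K)\supsetneq\aff_I(K)$ one cannot separate individual relative-interior points of $K\setminus\aff_I(K)$ by single CG cuts, so the argument must appeal to the Kronecker-style simultaneous Diophantine approximation inside Proposition~\ref{prop:killirr} and the single-equation reduction of Lemma~\ref{lem:eqsys-red} in order to collapse the approximation into $\aff_I(K)$ in bulk rather than pointwise.
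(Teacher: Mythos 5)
Your covering argument is where the proof breaks. You take $U_v=\R^n\setminus P_v$ and try to extract a finite subcover over the compact set $D=\overline{\CC(K,S_0)\setminus K}$, but the family $\set{U_v}$ does not cover $D$: any point of $\CC(K)\cap\relbd(K)$ (for instance an integer point on the boundary of $K$) lies in \emph{every} $P_v$, hence in no $U_v$, and such points belong to $D$. Your proposed fix --- ``points of $\partial K\cap D$ already lie in $K$ and need no cutting, so the augmented open cover admits a finite subcover'' --- does not repair this: whatever open sets you add to cover those boundary points also cover nearby points of $\CC(K,S_0)\setminus K$, and for those exterior points the finite subcover then provides no cut, so you cannot conclude $\CC(K,S_0\cup\bigcup_i S_{v_i})\subseteq K$. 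Note that your argument only ever uses the weak property $P_v\subseteq\set{x:\pr{v}{x}\leq h_K(v)}$, never the strong one $P_v\cap H_v(K)=\CC(F_v(K))$; with only half-space information no finite family can work, since $K$ is not a polytope. The paper's proof runs the compactness argument in the space of directions instead: after first forcing $\CC(K,S_A)\subseteq\aff_I(K)\subseteq\aff(K)=W+a$ (via Lemma~\ref{lem:eqsys-red} applied to $\aff(K)$ and Proposition~\ref{prop:killirr} applied to $\pm c$), it proves the key claim that for each $v\in W\cap S^{n-1}$ there is a neighborhood $N_v$ of $v$ in $W\cap S^{n-1}$ on which $h_{K_v}(v')\leq h_K(v')$, precisely because near $v$ the support of the polytope $K_v=\CC(K,S_v)$ is attained on its face $K_v\cap H_v=\CC(F_v(K))\subseteq K$. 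Compactness of $W\cap S^{n-1}$ then gives finitely many directions, and any $x\in\CC(K,S)\setminus K$ is refuted by separating \emph{inside} $W+a$ and comparing support functions. This direction-space argument is exactly what handles the neighborhoods of boundary points that defeat your pointwise cover, and it is the step missing from your proposal.

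Two secondary problems. First, your claim that finite generation makes $\CC(F_v(K))$ a rational polytope and hence $\CC(F_v(K))\subseteq\aff_I(F_v(K))$ ``by monotonicity of $\aff_I$'' is not a valid inference: a rational affine subspace of $\aff(F_v(K))$ need not contain any integer point (e.g.\ $x_1=\tfrac12$), so rationality alone does not place the set inside $\aff_I$; the containment in $\aff_I$ has to come from cuts as in Proposition~\ref{prop:killirr}, which is how the paper gets $\CC(K,S_A)\subseteq\aff_I(K)$ in one stroke (no ``second compactness step'' over direction classes is needed, and your description of which directions to feed into Proposition~\ref{prop:killirr} is not the right mechanism --- one applies it to $c$ and $-c$ from Lemma~\ref{lem:eqsys-red}, for which $H_{\pm c}\supseteq K$). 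Second, your restriction to $v$ with $F_v(K)\neq K$ leaves the lower-dimensional situation (separating points outside $\aff(K)$, and the degenerate case where $K$ has no proper exposed faces) untreated; the paper avoids this by first confining $\CC(K,S_A)$ to $W+a$ and then separating only along directions in $W$.
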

\begin{proof}
Let us express $\aff(K)$ as $\set{x \in \R^n: Ax = b}$. Note that $\aff(K) \neq \emptyset$ since $K \neq \emptyset$. By Lemma \ref{lem:eqsys-red} there exists $\lambda$, $c = \lambda A$ and $d = \lambda b$, and such that $\aff(K) \cap \Z^n = \set{x \in \Z^n: \pr{c}{x} = b}$.
Since $h_K(c) = b$ and $h_K(-c) = -b$, using Proposition \ref{prop:killirr} on $c$ and $-c$, we can find $S_A \subseteq \Z^n$
such that $\CC(K,S_A) \subseteq \aff(\set{x \in \Z^n: \pr{c}{x} = b}) = \aff_I(K)$.

Express $\aff(K)$ as $W + a$, where $W \subseteq \R^n$ is a linear subspace and $a \in \R^n$. Now take $v \in W \cap
S^{n-1}$. Note that $F_v(K)$ is a proper exposed face  and hence, by assumption,
$\CC(F_v(K))$ is finitely generated.  Hence by Proposition \ref{thm:cg-lift} there exists $S_v \subseteq \Z^n$ such that $\CC(K,S_v)$ is a polytope, $\CC(K,S_v) \cap H_v(K) = \CC(F_v(K))$ and $\CC(K,S_v) \subseteq \set{x: \pr{x}{v} \leq h_K(v)}$.  Let $K_v = \CC(K,S_v)$, then we have the following claim.

\quad\linebreak
\textbf{Claim:} $\,\exists$   open neighborhood $N_v$ of
$v$ in $W \cap S^{n-1}$ such that $ v' \in N_v \Rightarrow h_{K_v}(v') \leq h_K(v')$.

\quad\linebreak
Since $K_v$ is a polytope, there exists $C \subseteq \R^n$, $|C| < \infty$, such that $K_v = \conv (C)$.  Then note
that $h_{K_v}(w) = \sup_{c \in C} \pr{c}{w}$. Now let $H = \set{c: h_K(v) = \pr{v}{c}, c \in C}$.  By construction, we
have that $\conv (H) = \CC(F_v(K))$.

First assume that $\CC(F_v(K)) = \emptyset$. Then $H = \emptyset$, and hence $h_{K_v}(v) < h_K(v)$. Since $K_v,K$ are
compact convex sets, we have that $h_{K_v},h_K$ are both continuous functions on $\R^n$ and hence $h_K - h_{K_v}$ is
continuous. Therefore there exists $\eps > 0$ such that $h_{K_v}(v') < h_K(v')$ for $\|v-v'\| \leq \eps$ as needed.

Now assume that $\CC(F_v(K)) \neq \emptyset$. Let $R = \max_{c \in C} \|c\|$, and let
\[
\delta = h_K(v) - \sup \set{\pr{v}{c}: c \in C \setminus H}.
\]
Now let $\eps = \frac{\delta}{2R}$. Now take any $v'$ such that $\|v'-v\| < \eps$. Now for all $c \in H$, we have that
\[
\pr{c}{v'} = \pr{c}{v} + \pr{c}{v'-v} = h_K(v) + \pr{c}{v'-v} \geq h_K(v) - \|c\|\|v'-v\| > h_K(v) -
R\frac{\delta}{2R} = h_K(v) - \frac{\delta}{2},
\]
and that for all $c \in C \setminus H$, we have that
\[
\pr{c}{v'} = \pr{c}{v} + \pr{c}{v'-v} \leq h_K(v) - \delta + \pr{c}{v'-v} \leq h_K(v) -\delta + \|c\|\|v'-v\|
< h_K(v) - \delta + \frac{\delta}{2} = h_K(v) - \frac{\delta}{2}.
\]
Therefore we have that $\pr{c}{v'}>\pr{c'}{v'}$ for all $c\in H$, $c'\in C \setminus H$ and hence
\begin{equation}
h_{K_v}(v') = \sup_{c \in C} \pr{c}{v'} = \sup_{c \in H} \pr{c}{v'} = h_{\CC(F_v(K))}(v') \leq h_K(v'),
\end{equation}
since $\CC(F_v(K)) \subseteq F_v(K) \subseteq K$. The statement thus holds by letting $N_v=\{v' \in S^{n-1}\,:\,\|v'-v\| \leq \eps\}$.
Note that $\set{N_v: v \in W \cap S^{n-1}}$ forms an open cover of $W \cap S^{n-1}$, and since $W \cap S^{n-1}$ is
compact, there exists a finite subcover $N_{v_1},\dots,N_{v_k}$ such that $\bigcup_{i=1}^k N_{v_i} = W \cap S^{n-1}$.
Now let $S = S_A ~\cup ~ \cup_{i=1}^k S_{v_i}$. We claim that $\CC(K,S) \subseteq K$. Assume not, then there exists $x \in
\CC(K,S) \setminus K$. Since $\CC(K,S) \subseteq \CC(K,S_A) \subseteq W+a$ and $K \subseteq W+a$, by the
separator theorem there exists $w \in W \cap S^{n-1}$ such that $h_K(w) = \sup_{y \in K} \pr{y}{w} < \pr{x}{w} \leq h_{\CC(K,S)}(w)$.
Since $w \in W \cap S^{n-1}$, there exists $i$, $1 \leq i \leq k,$ such that $w \in N_{v_i}$. Note then we obtain that
\begin{equation*}
h_{\CC(K,S)}(w) \leq h_{\CC(K,S_{v_i})}(w) = h_{K_{v_i}}(w) \leq h_K(w),
\end{equation*}
a contradiction. Hence $\CC(K,S) \subseteq K$ as claimed. $\CC(K,S)$ is a polytope because it is the intersection of polyhedra or which at least one is a polytope.
\end{proof}

\subsection{Approximation 2 of the CG closure}\label{sec:approx2}
In this section, we augment  the first approximation of $\CC(K)$ by finitely more CG cuts to construct a better approximation of $\CC(K)$. Apart from satisfying the condition that this approximation is contained in $K \cap \textup{aff}_I(K)$, it also satisfies the condition that its intersection with the relative boundary of $K$ is equal to the intersection of $\CC(K)$ with the relative boundary of $K$.

To achieve this approximation, the key observation is that since the first approximation of the CG closure was a polytope, therefore its intersection with relative boundary of $K$ is the union of a finite numbers of faces of
the first approximation of the CG closure. This implies that there are a finite number of faces of $K$ such that if we apply Proposition \ref{thm:cg-lift} to them (i.e. separates points in $F_v \setminus \aff_I(H_v)$ and add lifted version of the CG cuts for $F_v$), we are able to achieve the second approximation of the CG closure.

\begin{lem} Let $K \subseteq \R^n$ be a convex set and $P \subseteq K$ be a polytope. Then there exists $F_{v_1},\dots,F_{v_k}
	\subseteq K$, proper exposed faces of $K$, such that $ P \cap \relbd(K) ~\subseteq~ \bigcup_{i=1}^k F_{v_i}$
\label{lem:finite-bd}
\end{lem}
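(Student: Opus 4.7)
The plan is to exploit the finite face structure of the polytope $P$ and the fact that $P$ is the disjoint union of the relative interiors of its finitely many faces. For each face $G$ of $P$ whose relative interior meets $\relbd(K)$, I will show that $G$ is entirely contained in some proper exposed face $F_{v_G}(K)$ of $K$. Since $P$ has only finitely many faces, this yields the desired finite collection $\{F_{v_G}\}$.

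To construct $v_G$, fix any point $x \in \relint(G) \cap \relbd(K)$. Because $x$ lies on the relative boundary of $K$, there is a hyperplane supporting $K$ at $x$ within $\aff(K)$, and this hyperplane can be extended to a hyperplane $H \subseteq \R^n$ supporting $K$ with $K \cap H \subsetneq K$. Writing $H = H_v(K)$ for an appropriate $v \in \R^n$, we obtain that $F_v(K)$ is a proper exposed face of $K$ containing $x$. The one subtle point here, which I expect is the main technical obstacle, is ensuring the supporting hyperplane yields a \emph{proper} exposed face in $\R^n$ even when $K$ is not full dimensional; this requires using the relative boundary structure and carefully choosing the extension so that the resulting face is a strict subset of $K$.

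Next I argue that $G \subseteq F_v(K)$. Since $P \subseteq K$ we have $h_P(v) \leq h_K(v)$, and because $x \in P$ achieves $\pr{v}{x} = h_K(v)$, equality holds, i.e.\ $h_P(v) = h_K(v)$. Thus $P \cap H_v(K)$ is an exposed face of $P$ containing $x$. By the standard fact that the minimal face of a polytope containing a point $x$ is the unique face whose relative interior contains $x$, namely $G$, we conclude $G \subseteq P \cap H_v(K) \subseteq F_v(K)$. Set $v_G := v$.

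Finally, given any $x \in P \cap \relbd(K)$, the unique face $G$ of $P$ with $x \in \relint(G)$ satisfies $\relint(G) \cap \relbd(K) \neq \emptyset$, so $G$ appears among our chosen faces and $x \in G \subseteq F_{v_G}(K)$. This gives $P \cap \relbd(K) \subseteq \bigcup_G F_{v_G}(K)$, and as the union ranges over a subset of the finitely many faces of $P$, we obtain the required finite collection.
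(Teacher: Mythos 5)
Your proposal is correct and follows essentially the same route as the paper: cover $P \cap \relbd(K)$ by the finitely many faces of $P$ whose relative interiors meet $\relbd(K)$, and absorb each such face $G$ into a proper exposed face of $K$ coming from a supporting hyperplane at a point of $\relint(G) \cap \relbd(K)$ (the paper verifies $G \subseteq F_v(K)$ via extreme points of $G$, you via the face $P \cap H_v(K)$; both are routine). The properness subtlety you flag is handled in the paper exactly along the lines you indicate: writing $\aff(K) = W + a$, one takes the supporting normal $v$ in $W \cap S^{n-1}$, and since a nonzero $v \in W$ cannot have constant inner product on $\aff(K)$, the exposed face $F_v(K)$ is automatically a proper face of $K$.
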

\begin{proof}
Let $\mathcal{F} = \set{F: F \subseteq P, F \text{ a face of P }, \relint(F) \cap \relbd(K) \neq \emptyset}$. Since $P$
is polytope, note that the total number of faces of $P$ is finite, and hence $|\mathcal{F}| < \infty$. We claim that
\begin{equation}
P \cap \relbd(K) \subseteq \bigcup_{F \in \mathcal{F}} F.
\end{equation}
Take $x \in P \cap \relbd(K)$. Let $F_x$ denote the minimal face of $P$ containing $x$ (note that $P$ is a face
of itself). By minimality of $F_x$, we have that $x \in \relint(F_x)$. Since $x \in \relbd(K)$,
we have that $F_x \in \mathcal{F}$, as needed.

Take $F \in \mathcal{F}$. We claim that there exists $H_F \subseteq K$, $H_F$ a proper exposed face of $K$,
such that $F \subseteq H_F$. Take $x \in \relint(F) \cap \relbd(K)$. Let $\aff(K) = W + a$, where $W$ is a linear
subspace and $a \in \R^n$. Since $x \notin \relint(K)$, by the separator theorem, there exists $v \in W \cap S^{n-1}$
such that $h_K(v) = \pr{x}{v}$. Let $H_F = F_v(K)$. Note that since $v \in W \cap S^{n-1}$, $F_v(K)$ is a proper exposed
face of $K$. We claim that $F \subseteq H_F$. Since $F$ is a polytope, we have that $F = \conv(\ext(F))$. Write $\ext(F) =
\set{c_1,\dots,c_k}$. Now since $x \in \relint(F)$, there exists $\lambda_1,\dots,\lambda_k > 0$, $\sum_{i=1}^k
\lambda_i = 1$, such that $\sum_{i=1}^k \lambda_i c_i = x$. Now since $c_i \in K$, we have that $\pr{c_i}{v} \leq
h_K(v)$. Therefore, we note that
\begin{equation}
\pr{x}{v} = \pr{\sum_{i=1}^k \lambda_i c_i}{v} = \sum_{i=1}^k \lambda_i \pr{c_i}{v} \leq \sum_{i=1}^k \lambda_i h_K(v) =
 h_K(v)
\end{equation}
Since $\pr{x}{v} = h_K(v)$, we must have equality throughout. To maintain equality, since $\lambda_i > 0$, $1 \leq i
\leq k$, we must have that $\pr{c_i}{v} = h_K(v)$, $1 \leq i \leq k$. Therefore $c_i \in H_F$, $1 \leq i \leq k$,
and hence $F = \conv(c_1,\dots,c_k) \subseteq H_F$, as needed.

To conclude the proof, we note that the set $\set{H_F: F \in \mathcal{F}}$ satisfies the conditions of the lemma.
\end{proof}

\begin{prop} \label{lem:inter-body}
Let $K \subseteq \R^n$ be a compact convex set. If  $\CC(F_v)$ is finitely generated for any proper exposed face $F_v$ then  $\exists ~ S \subseteq \Z^n$, $|S| < \infty$, such that
	\begin{alignat}{4}
	\label{eqn56}\CC(K,S) &\subseteq K \cap \aff_I(K)\\
\label{eqn56b}\CC(K,S) \cap \relbd(K) &= \CC(K) \cap \relbd(K)
\end{alignat}
\end{prop}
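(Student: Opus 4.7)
The plan is to build $S$ in two stages: a first stage using Proposition \ref{lem:move-inside} to secure condition \eqref{eqn56}, and a second stage that ``patches'' the relative boundary using Proposition \ref{thm:cg-lift} on a finite collection of proper exposed faces supplied by Lemma \ref{lem:finite-bd}. First, invoke Proposition \ref{lem:move-inside} to obtain a finite $S_0 \subseteq \Z^n$ with $\CC(K,S_0) \subseteq K \cap \aff_I(K)$ and $\CC(K,S_0)$ a polytope. Set $P := \CC(K,S_0)$ and apply Lemma \ref{lem:finite-bd} to the inclusion $P \subseteq K$: this produces proper exposed faces $F_{v_1}(K),\dots,F_{v_k}(K)$ of $K$ with
\[
P \cap \relbd(K) \;\subseteq\; \bigcup_{i=1}^k F_{v_i}(K).
\]

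For each $i$, the hypothesis gives that $\CC(F_{v_i}(K))$ is finitely generated, so Proposition \ref{thm:cg-lift} produces a finite $S_i \subseteq \Z^n$ with
\[
\CC(K,S_i) \cap H_{v_i}(K) = \CC(F_{v_i}(K)) \quad\text{and}\quad \CC(K,S_i) \subseteq \{x : \pr{v_i}{x} \leq h_K(v_i)\}.
\]
Take $S := S_0 \cup S_1 \cup \dots \cup S_k$, which is finite. Condition \eqref{eqn56} is immediate since $\CC(K,S) \subseteq \CC(K,S_0) \subseteq K \cap \aff_I(K)$.

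For \eqref{eqn56b}, the inclusion $\CC(K) \cap \relbd(K) \subseteq \CC(K,S) \cap \relbd(K)$ is trivial from $\CC(K) \subseteq \CC(K,S)$. For the reverse, take $x \in \CC(K,S) \cap \relbd(K)$. Since $\CC(K,S) \subseteq P$, we have $x \in P \cap \relbd(K)$, so by Lemma \ref{lem:finite-bd} there exists $i$ with $x \in F_{v_i}(K) = K \cap H_{v_i}(K)$; in particular $x \in H_{v_i}(K)$. Combined with $x \in \CC(K,S) \subseteq \CC(K,S_i)$, this gives
\[
x \in \CC(K,S_i) \cap H_{v_i}(K) = \CC(F_{v_i}(K)).
\]
The final step uses a monotonicity observation: since $F_{v_i}(K) \subseteq K$ we have $h_{F_{v_i}(K)}(w) \leq h_K(w)$ for every $w \in \Z^n$, hence $\lfloor h_{F_{v_i}(K)}(w) \rfloor \leq \lfloor h_K(w) \rfloor$, which yields $\CC(F_{v_i}(K)) \subseteq \CC(K)$. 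Therefore $x \in \CC(K)$, completing \eqref{eqn56b}.

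The conceptual work has already been done in Sections \ref{sec:long} and \ref{sec:approx1}; the main thing to verify here is that the finitely many faces delivered by Lemma \ref{lem:finite-bd} really do cover all of $P \cap \relbd(K)$, so that no boundary point escapes the finite patching. The monotonicity step $\CC(F_v(K)) \subseteq \CC(K)$ is the only other point that could look suspicious, and it reduces to the one-line comparison of support functions above.
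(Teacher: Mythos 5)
Your proposal is correct and follows essentially the same route as the paper: Proposition \ref{lem:move-inside} for the first approximation, Lemma \ref{lem:finite-bd} to cover $P \cap \relbd(K)$ by finitely many proper exposed faces, Proposition \ref{thm:cg-lift} on each face, and the same chain $x \in \CC(K,S_i) \cap H_{v_i} = \CC(F_{v_i}) \subseteq \CC(K)$. Your explicit monotonicity justification $\lfloor h_{F_{v_i}}(w)\rfloor \leq \lfloor h_K(w)\rfloor$ is exactly the observation the paper leaves implicit, so there is no substantive difference.
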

\begin{proof}
By Proposition \ref{lem:move-inside}, there exists $S_I \subseteq \Z^n$, $|S_I| < \infty$, such that $\CC(K,S_I) \subseteq K
\cap \aff_I(K)$ and $\CC(K,S_I)$ is a polytope.  Since $\CC(K,S_I) \subseteq K$ is a polytope, let $F_{v_1},\dots,F_{v_k}$ be the proper exposed faces of $K$ given by Lemma~\ref{lem:finite-bd}.
By Proposition \ref{thm:cg-lift}, there
exists $S_i \subseteq \Z^n$, $|S_i| < \infty$, such that $\CC(K,S_i) \cap H_{v_i} = \CC(F_{v_i})$.
Let $S = S_I \cup \cup_{i=1}^k S_i$. We claim that $\CC(K,S) \cap \relbd(K) \subseteq \CC(K) \cap \relbd(K)$. For this note that $x
\in \CC(K,S) \cap \relbd(K)$ implies  $x \in \CC(K,S_I) \cap \relbd(K)$, and hence there exists $i$, $1 \leq i
\leq k$, such that $x \in F_{v_i}$. Then
\begin{align*}
	x \in \CC(K,S) \cap H_{v_i} \subseteq \CC(K,S_i) \cap H_{v_i}
	= \CC(F_{v_i}) \subseteq \CC(K) \cap \relbd(K).
\end{align*}
The reverse inclusion is direct.
\end{proof}

\section{Proof of Theorem}\label{sec:final}

Finally, we have all the ingredients to prove the main result of this paper. The proof is by induction on the dimension of $K$. Trivially, the result holds for zero dimensional convex body. Now by the induction hypothesis, we are able to construct the second approximation of $\CC(K)$ described in Section \ref{sec:approx2} (since it assumes that the CG closure of every exposed face is a polytope). Now the key observation is that any CG cut that is not dominated by those already considered in the second approximation of the CG closure must separate a vertex of this second approximation that additionally lies  in the relative interior of $K$. Then it is not difficult to show that there can exist only a finite number of such CG cuts, showing that the CG closure is a polytope. This proof idea is similar to a proof idea used in the case strictly convex sets. 

\begin{thm} Let $K \subseteq \R^n$ be a non-empty compact convex set. Then $\CC(K)$ is finitely generated.
\end{thm}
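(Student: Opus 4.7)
The plan is to induct on $\dim(K)$, with the trivial base case $\dim(K)=0$ (either $p \in \Z^n$ and the cuts $\pm e_i$ pin $p$ down, or some pair $(e_i,-e_i)$ is already infeasible). For the inductive step, every proper exposed face $F_v(K)$ has finitely generated $\CC(F_v(K))$ by hypothesis, so Proposition~\ref{lem:inter-body} supplies a finite $S \subseteq \Z^n$ with $\CC(K,S)$ a polytope inside $K \cap \aff_I(K)$ satisfying $\CC(K,S) \cap \relbd(K) = \CC(K) \cap \relbd(K)$. The degenerate subcases $\CC(K,S)=\emptyset$ and $\CC(K)=\emptyset$ are easy: the latter by a compactness argument (the family of closed sets $K \cap \set{x : \pr{a}{x} \le \floor{h_K(a)}}$ has empty total intersection in the compact set $K$, so a finite subfamily does). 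I may therefore assume $\CC(K,S) \supsetneq \CC(K) \neq \emptyset$.

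The key observation is that a CG cut $\pr{a}{x}\le \floor{h_K(a)}$ is non-redundant on the polytope $\CC(K,S)$ only if some vertex violates it, and that vertex must lie in $\relint(K)$, since every vertex in $\relbd(K)$ already belongs to $\CC(K)$ and satisfies every CG cut. For each vertex $v$ of $\CC(K,S)$ in $\relint(K)$ pick $\eps_v > 0$ with $(v+\eps_v B^n) \cap \aff(K) \subseteq K$; a cut separating $v$ must satisfy $h_K(a) - \pr{a}{v} < 1$, while translating $v$ within $\aff(K)$ gives $h_K(a) - \pr{a}{v} \ge \eps_v \|P_W a\|$ where $W=\mathrm{lin}(\aff(K))$ and $P_W$ denotes orthogonal projection. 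Hence $\|P_W a\| < 1/\eps_v$, and a fortiori $\|P_{W_I}a\| < 1/\eps_v$ for $W_I = \mathrm{lin}(\aff_I(K)) \subseteq W$. Crucially, $\aff_I(K)$ is the affine hull of integer points, so $W_I$ is a rational subspace and $P_{W_I}(\Z^n)$ is a genuine lattice in $W_I$; hence only finitely many $b \in P_{W_I}(\Z^n)$ arise as $P_{W_I}a$ for separating cuts.

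Fix $z_0 \in \aff_I(K) \cap \Z^n$. Each CG cut restricted to $\aff_I(K)$ reads $\pr{b}{x-z_0} \le c_a$ with $b=P_{W_I}a$ and $c_a = \floor{h_K(a)} - \pr{a}{z_0} \in \Z$, and since $\CC(K) \neq \emptyset$ the value $\pr{b}{x^*-z_0}$ for any $x^* \in \CC(K)$ lower-bounds $\{c_a : P_{W_I}a=b\}$; being an integer set bounded below it attains its minimum at some $a_b^* \in \Z^n$. Let $T$ be the (finite) collection of these $a_b^*$. I claim $\CC(K,S\cup T)=\CC(K)$: any CG cut falls in one of three cases. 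If $P_{W_I}a=0$ the cut is constant on $\aff_I(K)$ and is satisfied there because for any $x \in K \cap \aff_I(K)$ one has $\pr{a}{x}=\pr{a}{z_0}\le h_K(a)$ with $\pr{a}{z_0} \in \Z$. If $\|P_{W_I}a\| \ge 1/\min_v \eps_v$, no vertex of $\CC(K,S)$ violates the cut, so it is already dominated by $S$. Otherwise $b=P_{W_I}a$ is one of the finitely many relevant directions and $a_b^* \in T$ gives an at-least-as-tight restriction on $\aff_I(K)$. The principal obstacle I would expect is the possibility that $\aff(K)$ is irrationally oriented so that $P_W(\Z^n)$ fails to be discrete; this is sidestepped because Proposition~\ref{lem:inter-body} has already placed us inside the rational subspace $\aff_I(K)$, where $P_{W_I}(\Z^n)$ \emph{is} a lattice.
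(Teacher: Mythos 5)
Your proof is correct and follows essentially the same route as the paper: induction on dimension, Proposition~\ref{lem:inter-body} for the approximation, the observation that a non-redundant cut must separate a vertex of $\CC(K,S)$ lying in $\relint(K)$, the resulting norm bound on the projection of the cut vector onto the (rational) direction space of $\aff_I(K)$, and the selection of a minimal integer right-hand side for each of the finitely many lattice directions. The only deviations are cosmetic: you keep the offset $z_0$ rather than translating $K$ by an integer point, and you bound the right-hand sides below via a point of $\CC(K)$ (patching the $\CC(K)=\emptyset$ case by compactness combined with $S$) where the paper instead uses $\floor{h_{K\cap W}(w)} > -\infty$.
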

\begin{proof}
We proceed by induction on the affine dimension of $K$. For the base case, $\dim(\aff(K)) = 0$, i.e. $K = \set{x}$ is a
single point. Here it is easy to see that setting $S = \set{\pm e_i: i \in [n]}$, we get that $\CC(K,S) = \CC(K)$. The
base case thus holds.

Now for the inductive step let $0 \leq k < n$ let $K$ be a compact convex set where $\dim(\aff(K)) = k+1$ and assume the result holds for sets of lower dimension. By the induction hypothesis, we know that $\CC(F_v)$ is finitely generated for every proper
exposed face $F_v$ of $K$, since $\dim(F_v)\leq k$. By
Proposition \ref{lem:inter-body}, there exists a set $S \subseteq \Z^n$, $|S| < \infty$, such that \eqref{eqn56} and \eqref{eqn56b} hold.
If $\CC(K,S) = \emptyset$, then we are done. So assume that $\CC(K,S) \neq \emptyset$. Let $A = \aff_I(K)$. Since $\CC(K,S) \neq \emptyset$, we have that $A \neq \emptyset$ (by (\ref{eqn56})), and so we may pick $t \in A \cap \Z^n$. Note that $A - t = W$, where $W$ is
a linear subspace of $\R^n$ satisfying $W = \mathrm{span}(W \cap \Z^n)$. Let $L = W \cap \Z^n$. Since $t \in \Z^n$, we easily
see that $\CC(K-t,T) = \CC(K,T) - t$  for all $T  \subseteq \Z^n$.  Therefore $\CC(K)$ is finitely generated iff $\CC(K-t)$ is. Hence replacing $K$ by $K-t$, we may assume that
$\aff_I(K) = W$.

Let $\pi_W$ denote the orthogonal projection onto  $W$. Note that for all $x \in W$, and $z \in \Z^n$,
we have that $\pr{z}{x} = \pr{\pi_W(z)}{x}$. Now since $\CC(K,S) \subseteq K \cap W$, we see that for all $z \in \Z^n$
\begin{equation*}
\CC(K,S \cup \set{z}) = \CC(K,S) \cap \set{x: \pr{z}{x} \leq \floor{h_K(z)}} = \CC(K,S) \cap \set{x: \pr{\pi_W(z)}{x}
\leq \floor{h_K(z)}}.
\end{equation*}
Let $L^* = \pi_W(\Z^n)$. Since $W$ is a rational subspace, we have that $L^*$ is full dimensional lattice in $W$. Now
fix an element of $w \in L^*$ and  examine $V_w := \set{\floor{h_K(z)}: \pi_W(z) = w, z \in \Z^n}$.
Note that $V_w \subseteq \Z$. We claim that $\inf(V_w) \geq -\infty$. To see this, note that
\begin{align}
\inf \set{\floor{h_K(z)}: \pi_W(z) = w, z \in \Z^n} &\geq \inf \set{\floor{h_{K \cap W}(z)}: \pi_W(z)=w, z \in \Z^n} \\
                                 &= \inf \set{\floor{h_{K \cap W}(\pi_W(z))}: \pi_W(z)=w, z \in \Z^n} \\
                                 &= \floor{h_{K \cap W}(w)} > -\infty.
\end{align}
Now since $V_w$ is a lower bounded set of integers, there exists $z_w \in \pi^{-1}_W(w) \cap \Z^n$ such that $\inf(V_w) =
\floor{h_K(z_w)}$. From the above reasoning, we see that $ \CC(K,S \cup \pi^{-1}_W(z) \cap \Z^n) = \CC(K, S \cup \set{z_w})$.
Now examine the set $C = \set{w: w \in L^*, \CC(K,S \cup \set{z_w}) \subsetneq \CC(K,S)}$.
Here we get that
\begin{equation*}
\CC(K) = \CC(K, S \cup \Z^n) = \CC(K, S \cup \set{z_w: w \in L^*}) = \CC(K, S \cup \set{z_w: w \in C}).
\end{equation*}
From the above equation, if we show that $|C| < \infty$, then $\CC(K)$ is finitely generated. To do this, we will
show that there exists $R > 0$, such that $C \subseteq RB_n$, and hence $C \subseteq L^* \cap RB_n$. Since $L^*$ is a
lattice, $|L^* \cap RB_n| < \infty$ for any fixed $R$, and so we are done.

Now let $P = \CC(K,S)$. Since $P$ is a polytope, we have that $P = \conv(\ext(P))$. Let $I = \set{v: v \in \ext(P), v
\in \relint(K)}$, and let $B = \set{v: v \in \ext(P), v \in \relbd(K)}$. Hence $\ext(P) = I \cup B$. By assumption on
$\CC(K,S)$, we know that for all  $v \in B$, we have that $v \in \CC(K)$. Hence for all $z \in \Z^n$, we must have that
$\pr{z}{v} \leq \floor{h_K(z)}$ for all $v \in B$.
Now assume that for some $z \in \Z^n$, $\CC(K,S \cup \set{z}) \subsetneq \CC(K,S) = P$. We claim that $\pr{z}{v} > \floor{h_K(z)}$ for some $v \in
I$.
If not, then $\pr{v}{z} \leq \floor{h_K(z)}$ for all $v \in \ext(P)$, and hence $\CC(K,S \cup \set{z}) =
\CC(K,S)$, a contradiction. Hence such a $v \in I$ must exist.

For $z \in \Z^n$, note that $h_K(z) \geq h_{K \cap W}(z) = h_{K \cap W}(\pi_W(z))$.
Hence $\pr{z}{v} > \floor{h_K(z)}$ for $v \in I$ only if $\pr{\pi_W(z)}{v} = \pr{z}{v} > \floor{h_{K \cap
W}(\pi_W(z))}$. Let
$C' := \set{ w \in L^*,:\, \exists v \in I, \pr{v}{w} > \floor{h_{K \cap W}}(w)}$.
From the previous discussion, we see that $C \subseteq C'$.

Since $I \subseteq \relint(K) \cap W = \relint(K \cap W)$ we have  $\delta_v = \sup \set{r \geq 0: rB_n \cap W + v \subseteq K \cap W} > 0$ for all $v \in I$.
Let $\delta = \inf_{v \in I} \delta_v$. Since $|I| < \infty$, we see that $\delta > 0$. Now let $R = \frac{1}{\delta}$.
Take $w \in L^*$, $\|w\| \geq R$. Note that $\forall v \in I$,
\begin{equation}
\floor{h_{K \cap W}(w)} \geq h_{K \cap W}(w) - 1 \geq h_{(v + \delta B_n) \cap W}(w) - 1 = \pr{v}{w} + \delta\|w\|-1
\geq \pr{v}{w}.
\end{equation}
Hence $w \notin C'$. Therefore $C \subseteq C' \subseteq RB_n$ and $\CC(K)$ is finitely generated.
\end{proof}

\bibliographystyle{amsplain}
\bibliography{references,stuff}

\end{document}